\documentclass{amsart}
\usepackage{amsmath}
  \usepackage{paralist}
  \usepackage{graphics} 
  \usepackage{epsfig} 
\usepackage{graphicx}  \usepackage{epstopdf}
\usepackage{multirow}
\usepackage{booktabs}
\usepackage[pagewise]{lineno} 
 \usepackage[colorlinks=true]{hyperref}
\hypersetup{urlcolor=blue, citecolor=red}

  \textheight=8.2 true in
   \textwidth=5.0 true in
    \topmargin 30pt
     \setcounter{page}{1}



\newtheorem{theorem}{Theorem}[section]
\newtheorem{corollary}{Corollary}

\newtheorem{lemma}[theorem]{Lemma}

\theoremstyle{definition}
\newtheorem{definition}[theorem]{Definition}

\title[The Spectrum of DDEs with Hierarchical Large Delays] 
      {The Spectrum of Delay Differential Equations with Multiple Hierarchical Large Delays}
      
\author{Stefan Ruschel}
\address{Department of Mathematics, University of Auckland, Auckland 1142, New Zealand}
\email{stefan.ruschel@auckland.ac.nz}

\author{Serhiy Yanchuk}
\address{Institut f\"{u}r Mathematik, Technische Universit\"{a}t Berlin, Strasse des 17. Juni 136, 10623 Berlin, Germany}
\email{yanchuk@math.tu-berlin.de}

\keywords{Linear Delay Differential Equations, Large Delay, Multiple Delays.}


\thanks{The authors acknowledge support by the Deutsche Forschungsgemeinschaft (DFG, German\\
Research Foundation) - Project 411803875 and SFB 910. The research was conducted while SR was doctoral student at Technische Universit\"{a}t Berlin.}

\begin{document}
\maketitle

\begin{abstract}
We prove that the spectrum of the linear delay differential equation
$x'(t)=A_{0}x(t)+A_{1}x(t-\tau_{1})+\ldots+A_{n}x(t-\tau_{n})$ with
multiple hierarchical large delays $1\ll\tau_{1}\ll\tau_{2}\ll\ldots\ll\tau_{n}$
splits into two distinct parts: the strong spectrum and the pseudo-continuous
spectrum. As the delays tend to infinity, the strong spectrum converges
to specific eigenvalues of $A_{0}$, the so-called asymptotic strong
spectrum. Eigenvalues in the pseudo-continuous spectrum however, converge
to the imaginary axis. We show that after rescaling, the pseudo-continuous
spectrum exhibits a hierarchical structure corresponding to the time-scales
$\tau_{1},\tau_{2},\ldots,\tau_{n}.$ Each level of this hierarchy
is approximated by spectral manifolds that can be easily computed.
The set of spectral manifolds comprises the so-called asymptotic continuous
spectrum. It is shown that the position of the asymptotic strong spectrum
and asymptotic continuous spectrum with respect to the imaginary axis
completely determines stability. In particular, a generic destabilization
is mediated by the crossing of an $n$-dimensional spectral manifold
corresponding to the timescale $\tau_{n}$. 
\end{abstract}

\section{Introduction }

Delay Differential Equations (DDE) are highly relevant in various
fields of applications including secure communication \cite{Argyris2005},
information processing \cite{Appeltant2011}, and many others \cite{Erneux2009,Atay2010,Hartung2006}.
When studying these - generally nonlinear - equations close to equilibrium,
one is first concerned with the spectral properties of a corresponding
linearized system of the form \cite{Bellman1963,Hale1993,Diekmann1995,Sieber2017}
\begin{equation}
x'(t)=A_{0}x(t)+A_{1}x(t-\tau_{1})+\cdots+A_{n}x(t-\tau_{n}).\label{eq:DDE}
\end{equation}
A complete description of the spectrum of (\ref{eq:DDE}) can be formidable
task even for a single delay, and is generally unfeasible for two
or more. Specific cases therefore have been studied in much detail,
see \cite{Hayes1950,Cooke1986,Belair1994a,Lucht1998,Ruan2003} and
references therein. It is convenient however, if the involved time
delays are large. In this paper, we provide a detailed description
of the spectrum of (\ref{eq:DDE}) with finitely many large hierarchical
delays 
\begin{equation}
1\ll\tau_{1}\ll\tau_{2}\ll\dots\ll\tau_{n},\label{eq:hierarchical}
\end{equation}
which bear some analogy to spatially extended systems \cite{Yanchuk2015a,YanchukGiacomelli2017}.
Figure~\ref{fig:intro-example} provides an example of such a spectrum.
One can observe a complicated structure and that there is a large
number of eigenvalues that are very close to the imaginary axis, i.e.
they play important role for determining stability. This manuscript
provides not only an analytical description of such spectra, but also
explicit analytic expressions for their approximations.
\begin{figure}
\begin{centering}
\includegraphics[width=1.\linewidth]{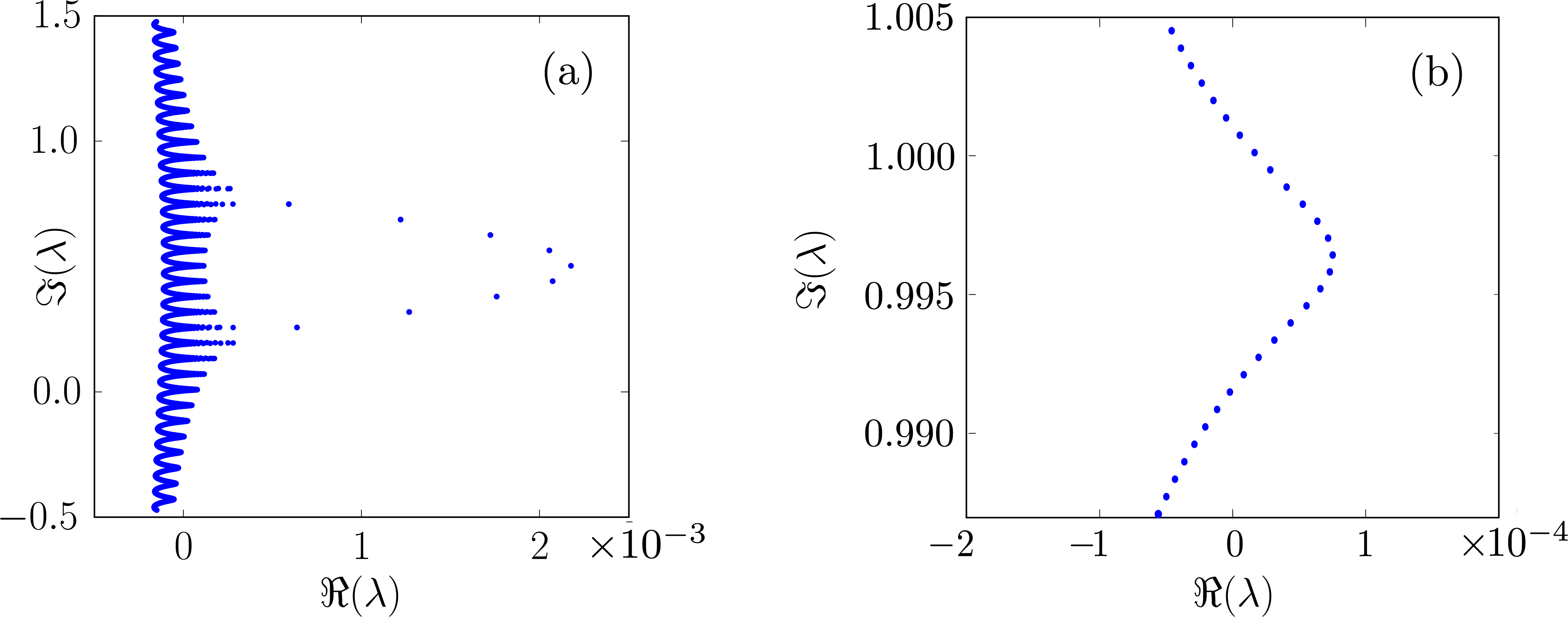}
\par\end{centering}
\caption{\label{fig:intro-example} Example of the numerically computed spectrum
of eigenvalues for system (\ref{eq:DDE}) with $n=2$, $A_{0}=-0.4+0.5i$,
$A_{1}=0.5$, $\tau_{1}=100$, and $\tau_{2}=10000$. Panel (a): blue dots are
numerically computed eigenvalues. Panel (b): zoom into panel (a).}
\end{figure}
Examples for DDEs with multiple large hierarchical delays can be drawn
from non-linear optics, where the finite-time communication delays
are typically much larger than the device's internal timescales \cite{Soriano2013}.
Specific examples of (\ref{eq:DDE}) for two hierarchical large delays
of different size include semiconductor lasers with two optical feedback
loops of different length \cite{Yanchuk2014a,Yanchuk2015,Marconi2015,Giacomelli2019},
and ring-cavity lasers with optical feedback \cite{Franz2008,Otto2012,Jaurigue2017}.
Additional examples can be found in applications to biological systems,
when a corresponding separation of time-scales is justified \cite{Shayer2000,Saha2017,Cooke1996,Ruschel2018}. 

This work extends the results of \cite{Lichtner2011} to multiple
large hierarchical delays, under more general non-genericity conditions.
We show that the spectrum splits into two distinct parts with different
scaling behavior: the strong spectrum and the pseudo-continuous spectrum.
As the delays tend to infinity, the strong~spectrum converges to
specific eigenvalues of $A_{0}$, the so-called asymptotic~strong~spectrum.
Eigenvalues in the pseudo-continuous spectrum converge to the imaginary
axis as the delays increase. We show that after rescaling the pseudo-continuous
spectrum exhibits a hierarchical structure corresponding to the time-scales
$\tau_{1},\tau_{2},\ldots,\tau_{n}.$ In particular, we show that
this set of eigenvalues can be represented as a union of subsets corresponding
to different timescales $\tau_{k}$. Generically, for $1\leq k\leq n$,
each of these sets can be associated with a $k$-dimensional spectral
manifold in the positive half plane that extends to the negative half-plane
under certain degeneracy conditions related to the rank of the matrices
$A_{k+1},\ldots,A_{n}$ or if $k=n$. These manifolds can be computed explicitly
and the corresponding eigenvalues $\lambda\in\mathbb{C}$ can be found
by projecting the manifolds to the complex plane. Moreover, the asymptotic
spectra are exact at the imaginary axis, and therefore, the stability
boundaries are completely determined by the position of the asymptotic
strong spectrum and asymptotic continuous spectrum with respect to
the imaginary axis. It is shown that a generic destabilization of
such a system takes place by the crossing of an $n$-dimensional spectral
manifold corresponding to the timescale $\tau_{n}$. Section~\ref{sec:overview}
contains an overview of our rigorous results, along with an introduction
to the needed basic concepts. The corresponding proofs contained in
Sec.~\ref{sec:proofs} are largely influenced by the proofs in \cite{Lichtner2011}.
Similar, to the single large delay case \cite{Heiligenthal2011},
our results in part can be extended to linear DDEs with time varying
coefficients, see Ref. \cite{DHuys2013} for more details.

Aiming at a rigorous description, the presentation in Sec.~\ref{sec:overview}
sometimes appears technical. We included Table \ref{tab:notation} for quick referencing of frequently used notation throughout the article. To illustrate our results and to foster understanding of the main ideas, we present an example of analytically
and numerically computed spectra for the scalar case with two large
hierarchical delays in Sec.~\ref{sec:example}.

\begin{table}
	\begin{tabular*}{\textwidth}{l@{\extracolsep{\fill}}lll}
		Symbol 	 &	Description  & Reference	\\
		\midrule
				
		$\Sigma^\varepsilon$ 		             
		& Spectrum 
		& Eq.~(\ref{eq:spec})\\
		
        $\Sigma_s^\varepsilon$ 		             
		& Strong spectrum 		                    
		& Def.~\ref{def:strong},~Eq.~(\ref{eq:strong-spec})\\
		
        $\Sigma_c^\varepsilon$ 		             
		& Pseudo-continuous spectrum 		                    
		& Def.~\ref{def:strong},~Eq.~(\ref{eq:pseudocont-spectrum})\\
		
		$\tilde{\Sigma}_{k}^{\varepsilon}$ 		             
		& Truncated stable $\tau_k$-spectrum						                    
		& Def.~\ref{def:deg-spec},~Eq.~(\ref{eq:truncated-stable-spectrum})\\
		
		\par
		\midrule
		\par
		
        $\mathcal{A}_0$
		& Asymptotic strong spectrum								                    
		& Def.~\ref{def:strong},~Eq.~(\ref{eq:asympt-strong-spec-1})\\
		
		$S_{0}^{+}$
		& Asymptotic strong unstable spectrum								                    
		& Def.~\ref{def:strong},~Eq.~(\ref{eq:asympt-strong-spec})\\
		
		$\tilde{S}_{0}^{-}$	             
		& Asymptotic strong stable spectrum							                    
		& Def.~\ref{def:deg-spec},~Eq.~(\ref{eq:asymptotic-strong-stable-spectrum})\\
		
		$\mathcal{A}_k$ 
		& Asymptotic continuous $\tau_k$-spectrum								                    
		& Def.~\ref{def:trunc-spec},~Eq.~(\ref{asymptotic-continuous-spectrum})\\
		
		$S_{k}^{+}$	
		& Asymptotic continuous stable $\tau_k$-spectrum									                    
		& Def.~\ref{def:trunc-spec},~Eq.~(\ref{eq:asymptotic-continuous-unstable-spec})\\
		
		$\tilde{S}_{k}^{-}$
		& Asymptotic continuous unstable $\tau_k$-spectrum						                    
		& Def.~\ref{def:trunc-spec},~Eq.~(\ref{asymptotic-continuous-stable-spec})\\
		
		\par
		\midrule
		\par
		
		$A_k$ 		             
		& Coefficient matrix corresponding to delay $\tau_k$							                    
		& Eq.~(\ref{eq:DDE})\\
		
		$A_{j,1}^{(k)}$ 		             
		& Projection of coefficient matrix $A_j$ to the\\ 
		& cokernels of matrices $A_l$, $l=k,k+1,\ldots,n$							                    
		& Eq.~(\ref{eq:coeff-matrix-projected})\\
		
		$\chi^\varepsilon(\lambda)$ 		
		& Characteristic function						    
		& Eq.~(\ref{eq:cheq})\\
		
		$\tilde\chi^\varepsilon_k(\lambda)$ 		
		& Projected characteristic equation, $0\leq k< n$ 
		& Def.~\ref{def:deg-spec},~Eq.~(\ref{eq:proj-ce})\\
		
		$\chi_k,\tilde{\chi}_k$	  
		& Truncated characteristic equation, $0\leq k< n$
		& Def.~\ref{def:trunc-spec},
		Eqs.~(\ref{eq:ce-t1})--(\ref{eq:ce-tk})\\
		\midrule						
	\end{tabular*}\linebreak{}
	\caption{\label{tab:notation}Frequent notations.}
\end{table}

\section{Basic concepts and Overview of Results\label{sec:overview}}

We consider the special case of hierarchical time delays
$\tau_k = \sigma_k\varepsilon^{-k}$, where $\sigma_k>0$, $1\le k\le n $, and $\varepsilon>0$ is a small parameter. Hence, we consider the linear Delay Differential
Equation (DDE) 
\begin{equation}
x'(t)=A_{0}x(t)+\sum_{k=1}^{n}A_{k}x(t-\sigma_{k}\varepsilon^{-k})\label{eq:DDE-1}
\end{equation}
with $n\geq2$ hierarchical large delays and study the asymptotic
behavior of its solutions as $\varepsilon\to0$. Throughout, we assume
that $x(t)\in\mathbb{C}^{d}$ is a complex-valued, Euclidean vector
of size $d$ and $A_{k}\in\mathbb{C}^{d\times d},\,A_{k}\neq0\,,\,0\leq k\leq n$
are given matrices independent of time and $\varepsilon$. Existence
and uniqueness of solutions to (\ref{eq:DDE-1}), as well as the specific
notions of solution and state space will not be covered here, but
can be found in classic text books on Delay and Functional Differential
Equations \cite{Hale1993,Diekmann1995,Bellman1963}. 

Equation (\ref{eq:DDE-1}) can be thought of as similar in spirit
to an Ordinary Differential Equation (ODE) except that it may exhibit
so-called small solutions; those are solutions that ``collide''
with the trivial solution $x\equiv0$ in finite time, say $t_{1}$,
and equal zero for all $t\geq t_{1}$. Apart from this peculiarity,
that is up to small solutions, any solution of (\ref{eq:DDE-1}) can
be written as a superposition of exponential functions as in the case
of ODEs \cite{Hale1993}. In particular, the long term behavior of
the solution as $t\to\infty$ is governed by the characteristic exponents. 

In this sense, solving (\ref{eq:DDE-1}) is equivalent to finding
nontrivial solutions to the matrix-valued quasi-polynomial equation
$\Delta^{\varepsilon}(\lambda)v=0,$ where $\Delta^{\varepsilon}:\mathbb{C}\to\mathbb{C}^{d\times d},$
\begin{equation}
\Delta^{\varepsilon}(\lambda):=-\lambda I+A_{0}+\sum_{k=1}^{n}A_{k}\exp\left(-\lambda\sigma_{k}\varepsilon^{-k}\right)\label{eq:chma-1}
\end{equation}
is the characteristic matrix. A nontrivial solution $v$ exists, if
and only if there is $\lambda\in\mathbb{C}$ such that $\ker\Delta^{\varepsilon}(\lambda)\neq\emptyset$,
or equivalently $\det\Delta^{\varepsilon}(\lambda)=0$. For simplicity,
let us assume $\lambda$ is a simple root of $\det\Delta^{\varepsilon}(\lambda)$.
Together with a corresponding $0\neq v\in\ker\Delta^{\varepsilon}(\lambda)\subseteq\mathbb{C}^{d}$,
it gives rise to a solution $t\mapsto v\exp(\lambda t)$ of Eq.~(\ref{eq:DDE-1}).
See Ref.~\cite{Hale1993} for further details. The pair $(\lambda,v)\in\mathbb{C}\times\mathbb{C}^{d}$
is called an eigenvalue-eigenvector pair and the entirety of eigenvalues
$\lambda$ is called the spectrum 
\begin{equation}
\Sigma^{\varepsilon}:=\left\{ \lambda\in\mathbb{C}\,|\,\det\Delta^{\varepsilon}(\lambda)=0\right\} \label{eq:spec}
\end{equation}
of (\ref{eq:DDE-1}).

Hence, the problem consists of describing the asymptotic location
of complex-valued solutions to the characteristic equation
\begin{equation}
\chi^{\varepsilon}(\lambda):=\det\Delta^{\varepsilon}(\lambda)=0\label{eq:cheq}
\end{equation}
as $\varepsilon\to0$. For each fixed $\varepsilon>0$, much is known
about the solutions of (\ref{eq:cheq}). Firstly, there are countably
many solutions that continuously depend on parameters. Secondly, the real parts of solutions
accumulate at $-\infty$. Within each vertical stripe $[\alpha,\beta]\times i\mathbb{R}\subseteq\mathbb{C}$
there are only finitely many solutions \cite{Bellman1963,Hale1993}.
In particular, $\beta$ can be chosen $+\infty$ \cite{Hale1993}. The
following Secs.~\ref{subsec:Low-Rank-perturbations}--\ref{subsec:Spectral-manifolds}
present our main results. At first, it is convenient to discuss the non-generic case when some of the matrices $A_k$ do not have full rank, starting from highest order $A_n$. In this case, one can immediately identify spectral subsets of truncated characteristic equations that approximate certain subsets of $\Sigma^{\varepsilon}\cap\{\lambda\in\mathbb{C}|\Re(\lambda)<0\}$ for sufficiently small $\varepsilon$.

\subsection{Degeneracy spectrum\label{subsec:Low-Rank-perturbations}}

From the point of view of applications, we certainly cannot expect
the matrices $A_{k},\,0\leq k\leq n$ to be invertible. In this section,
we introduce the necessary conditions for our main Theorem \ref{thm:spec-approx} to hold.
To set the stage, consider the case when $A_{n}$ is not invertible.
Then, for sufficiently small $\varepsilon,$ we may think of Eq.~(\ref{eq:cheq}),
as a low rank perturbation of a certain truncated characteristic equation,
see Theorem~\ref{thm:degeneracy-spec}. Let us explain. If $d_{n}:=\text{rank}A_{n}<d$,
there exist unitary matrices $U_{n},V_{n}$ such that 
\begin{equation}
A_{n}=U_{n}\left(\begin{array}{cc}
0 & 0\\
0 & A_{n,4}^{(n)}
\end{array}\right)V_{n}^{\ast},\label{eq:singular-value-decomp}
\end{equation}
where $A_{n,4}^{(n)}\in\mathbb{C}^{d_{n}\times d_{n}}$ is a diagonal
matrix of full rank, and $V_{n}^{\ast}$ is the conjugate transpose
of $V_{n}$. Equation~(\ref{eq:singular-value-decomp}) is the singular
value decomposition of $A_{n}$ and the columns of $U_{n}$ and $V_{n}$
are the left and right singular vectors of $A_{n}$, respectively.

The columns of the matrices $U_{n}=[U_{n,1},U_{n,2}]$ and $V_{n}=[V_{n,1},V_{n,2}]$
are the left and right singular
vectors corresponding to the cokernel ($U_{n,1}$ and $V_{n,1}$)
and image ($U_{n,2}$ and $V_{n,2}$) of $A_{n}$. In particular, 
 $U_{n,1}^{\ast}A_{n}V_{n,1}=0$ and $U_{n,2}^{\ast}A_{n}V_{n,2}=A_{n,4}^{(n)}$, 
correspond to the projection onto the cokernel and image of $A_{n}$, respectively. This projection allows to define the following spectral sets. 
\begin{definition}[non-generic spectral subsets]
\label{def:deg-spec}Let $d_{n}:=\text{rank}A_{n}<d$.
\begin{enumerate}
\item[(i)] Define $U_{n,1},V_{n,1}\in\mathbb{C}^{d\times(d-d_{n})}$ as the
matrices containing the left and right singular vectors of $A_{n}$
corresponding to the singular value zero. Denote
\[
J_{1}^{(n)}:=U_{n,1}^{\ast}V_{n,1},
\]
\[
A_{j,1}^{(n)}:=U_{n,1}^{\ast}A_{j}V_{n,1},\quad j=0,\dots,n-1,
\]
and the corresponding \textit{projected characteristic equation}
\begin{equation}\label{eq:proj-ce}
\tilde{\chi}_{n-1}^{\varepsilon}(\lambda):=\det\left(-\lambda J_{1}^{(n)}+A_{0,1}^{(n)}+\sum_{k=1}^{n-1}A_{k,1}^{(n)}\exp\left(-\lambda\sigma_{k}\varepsilon^{-k}\right)\right).
\end{equation}
The set 
\[
\tilde{\Sigma}_{n-1}^{\varepsilon}:=\left\{ \lambda\in\mathbb{C}\,|\,\tilde{\chi}_{n-1}^{\varepsilon}(\lambda)=0,\,\Re(\lambda)<0\right\} 
\]
is called the \textit{truncated stable $\tau_{n-1}$-spectrum}.
\item[(ii)] If $A_{n-1,1}^{(n)}$ is again not invertible, this procedure is applied
iteratively. Recursively for all $1\leq k\leq n-1$ (starting from $n-1$), if
$\det A_{k,1}^{(k+1)}=0$, define $\tilde{U}_{k,1},\tilde{V}_{k,1}$
(notice the tilde notation) containing left and right singular vectors
of $A_{k,1}^{(k+1)}$ corresponding to the singular value zero. Denote
\[
J_{1}^{(k)}:=\tilde{U}_{k,1}^{\ast}J_{1}^{(k+1)}\tilde{V}_{k,1},
\]
\begin{equation}\label{eq:coeff-matrix-projected}
A_{j,1}^{(k)}:=\tilde{U}_{k,1}^{\ast}A_{j,1}^{(k+1)}\tilde{V}_{k,1},\quad j=0,\dots,k-1
\end{equation}
 and the corresponding truncated characteristic equation
\[
\tilde{\chi}_{k-1}^{\varepsilon}(\lambda):=\det\left(-\lambda J_{1}^{(k)}+A_{0,1}^{(k)}+\sum_{j=1}^{k-1}A_{j,1}^{(k)}\exp\left(-\lambda\sigma_{j}\varepsilon^{-j}\right)\right),
\]
for $1\leq k < n-1$, and
\[
\tilde{\chi}_{0}(\lambda):=\det\left(-\lambda J_{1}^{(1)}+A_{0,1}^{(1)}\right).
\]
\item[(iii)] Define $\underline{k},$ $1\leq\underline{k}\leq n-1$ as the smallest
index such $\det A_{k,1}^{(k+1)}=0$ for all $\underline{k}\leq k\leq n-1$
and $\det A_{n}=0$.
\item[(iv)] For $1\leq k\leq n-1,$ define set 
\begin{equation}\label{eq:truncated-stable-spectrum}
\tilde{\Sigma}_{k}^{\varepsilon}:=\left\{ \lambda\in\mathbb{C}\,|\,\tilde{\chi}_{k}^{\varepsilon}(\lambda)=0,\,\Re(\lambda)<0\right\} 
\end{equation}
for $k\geq\underline{k}$, and $\tilde{\Sigma}_{k}^{\varepsilon}=\emptyset$
otherwise. The set $\tilde{\Sigma}_{k}^{\varepsilon}$ is called the
\textit{truncated stable $\tau_{k}$-spectrum}. If $\underline{k}=1,$ set
\begin{equation}\label{eq:asymptotic-strong-stable-spectrum}
\tilde{S}_{0}^{-}:=\left\{ \lambda\in\mathbb{C}\,|\,\tilde{\chi}_{0}(\lambda)=0,\,\Re(\lambda)<0\right\} ,
\end{equation}
and $\tilde{S}_{0}^{-}:=\emptyset$ otherwise. $\tilde{S}_{0}^{-}$
is called \textit{asymptotic strong stable spectrum}.
\item[(v)] If $\underline{k}=1$ and $\det J_{1}^{(1)}=0$, define the matrices
$\mathcal{U}_1,\mathcal{V}_1$ containing left and right singular vectors
of $J_{1}^{(1)}$ corresponding to the singular value zero. 
\end{enumerate}
\end{definition}

These sets correspond to spectral directions along which Eq.~(\ref{eq:DDE})
acts as a DDE with fewer delays or even an ODE. Before stating our result, we have to guarantee that
Eq.~(\ref{eq:DDE}) is indeed a DDE and cannot be transformed into
a system of ODEs through variable transformations, one has to demand
the following non-degeneracy condition.

\medskip{}
{\noindent}\textbf{Condition (ND).} If $\det$$A_{n}=0$, $\underline{k}=1$
and $\det J_{1}^{(1)}=0,$ then $\det\left(\mathcal{U}_1^{\ast}A_{0,1}^{(1)}\mathcal{V}_1\right)\neq0$.

\medskip{}
\hspace{-11pt}This is a rather abstract condition. In order to build
some intuition, consider the following example. Let $d=2$, $n=1,$ and the matrices $A_{0}$
and $A_{1}$ are given by 
\[
A_{0}=\left(\begin{array}{cc}
a_{1} & a_{2}\\
a_{3} & a_{4}
\end{array}\right),\quad A_{1}=\left(\begin{array}{cc}
0 & 1\\
0 & 0
\end{array}\right).
\]
Clearly, $\text{rank}A_{1}=1<2$  and one readily
computes 
\[
U_{1,1}=\left(\begin{array}{c}
0\\
1
\end{array}\right),\quad V_{1,1}=\left(\begin{array}{c}
1\\
0
\end{array}\right),
\]
as well as
\[
J_{1}^{(1)}=U_{1,1}^{\ast}V_{1,1}=0,
\quad A_{0,1}^{(1)}=U_{1,1}^{\ast}A_0V_{1,1}=a_3. 
\]
We may set
$\mathcal{U}_1=1,$ $\mathcal{V}_1=1$. Condition (ND) then reads $$\mathcal{U}_1^{\ast}A_{0,1}^{(1)}\mathcal{V}_1 = a_{3}\neq0.$$ If $a_{3}=0$, the system is degenerate;
it corresponds to an ODE. Straightforward computation shows
that the (general) characteristic equation 
$$ 0=(a_{1}-\lambda)(a_{4}-\lambda)-a_{3}(a_{2}+e^{-\lambda\sigma_{1}/\varepsilon}) $$
does not depend on $e^{-\lambda\sigma_{1}/\varepsilon}$ in this case,
and the spectrum consists of $\{a_{1},a_{4}\}$ for all $\varepsilon$. 
On the basis of Def.~\ref{def:deg-spec},  the following Theorem \ref{thm:degeneracy-spec} provides a hierarchical approximation of $$\Sigma^{\varepsilon}\cap\{\lambda\in\mathbb{C}|\Re(\lambda)<0\}$$ by spectral subsets of truncated characteristic equations $\tilde\chi_k^\varepsilon$, when some of the matrices $A_k$ do not have full rank.


\begin{theorem}\label{thm:degeneracy-spec} 
Let $\det$$A_{n}=0$, $k$ be such that $\underline{k}-1\leq k\leq n-1$, and (ND) be satisfied. Further, let $\varepsilon>0$ be sufficiently small and $\mu_\varepsilon\in\tilde{\Sigma}_{k}^{\varepsilon}$ ($\mu_\varepsilon\in\tilde{S}_{0}^-$ for k=0). Then there exits a small neighborhood $U^\varepsilon(\mu_\varepsilon)\subset\mathbb{C}$ of $\mu_\varepsilon$ such that the number of eigenvalues $\lambda_\varepsilon\in\Sigma^\varepsilon\cap U^\varepsilon(\mu_\varepsilon)$ equals the multiplicity of $\mu_\varepsilon$ as a zero of $\tilde{\chi}^\varepsilon_k$.

\end{theorem}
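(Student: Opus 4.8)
The plan is to establish the statement by descending induction on the truncation level, removing one delay at a time; the heart of each inductive step is a Schur-complement factorization of $\det\Delta^\varepsilon$ combined with Rouch\'e's theorem. I first treat $k=n-1$. Conjugating the characteristic matrix (\ref{eq:chma-1}) by the unitary factors of the singular value decomposition (\ref{eq:singular-value-decomp}) multiplies $\chi^\varepsilon$ only by the unimodular constant $\overline{\det U_{n}}\,\det V_{n}$, so $\chi^\varepsilon$ and $\det(U_{n}^{\ast}\Delta^\varepsilon V_{n})$ share their zeros and multiplicities. Writing $U_{n}^{\ast}\Delta^\varepsilon(\lambda)V_{n}$ in $2\times2$ block form along $\mathbb{C}^{d}=\mathbb{C}^{d-d_{n}}\oplus\mathbb{C}^{d_{n}}$ (cokernel and image of $A_{n}$), the vanishing of the top-left, top-right and bottom-left blocks of $U_{n}^{\ast}A_{n}V_{n}$ shows that the $(1,1)$-block is exactly the matrix defining $\tilde\chi_{n-1}^\varepsilon$ in (\ref{eq:proj-ce}), namely $M_{11}(\lambda)=-\lambda J_{1}^{(n)}+A_{0,1}^{(n)}+\sum_{j=1}^{n-1}A_{j,1}^{(n)}\exp(-\lambda\sigma_{j}\varepsilon^{-j})$, while the off-diagonal blocks $M_{12},M_{21}$ carry only exponentials of order at most $\sigma_{n-1}\varepsilon^{-(n-1)}$, and the $(2,2)$-block is $M_{22}(\lambda)=-\lambda J_{4}+\cdots+A_{n,4}^{(n)}\exp(-\lambda\sigma_{n}\varepsilon^{-n})$.

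Next I would exploit the separation of exponential scales in the half-plane $\Re(\lambda)<0$. There the top-order term $A_{n,4}^{(n)}\exp(-\lambda\sigma_{n}\varepsilon^{-n})$, of norm $\exp(|\Re(\lambda)|\sigma_{n}\varepsilon^{-n})$, dominates $M_{22}$ for small $\varepsilon$, so $M_{22}$ is invertible, $\det M_{22}\neq0$, and $M_{22}^{-1}$ has norm of order $\exp(\Re(\lambda)\sigma_{n}\varepsilon^{-n})$. The Schur factorization $\det(U_{n}^{\ast}\Delta^\varepsilon V_{n})=\det(M_{22})\,\det(M_{11}-E)$, with $E:=M_{12}M_{22}^{-1}M_{21}$, then shows that near $\mu_\varepsilon$ (where $\det M_{22}\neq0$) the eigenvalues of $\Sigma^\varepsilon$ coincide with the zeros of $h:=\det(M_{11}-E)$, whereas $\mu_\varepsilon$ is a zero of $g:=\det M_{11}=\tilde\chi_{n-1}^\varepsilon$ of multiplicity $m$. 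Crucially, $E$ inherits from $M_{22}^{-1}$ the factor $\exp(\Re(\lambda)\sigma_{n}\varepsilon^{-n})$, which decays at the fastest scale $\varepsilon^{-n}$, whereas $M_{12},M_{21}$ and all entries of $M_{11}$ grow only at the slower scale $\varepsilon^{-(n-1)}$; expanding $\det(M_{11}-E)$ multilinearly, every term of $h-g$ contains at least one column of $E$ and is thus bounded by $\exp(\Re(\lambda)\sigma_{n}\varepsilon^{-n})$ times a factor growing only like $\exp(O(\varepsilon^{-(n-1)}))$.

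I would then apply Rouch\'e's theorem on a disk $U^\varepsilon(\mu_\varepsilon)$ centred at $\mu_\varepsilon$ whose radius $\rho$ is comparable to, but below, the $O(\varepsilon^{n-1})$ spacing of the zeros of $g$ (which oscillates at scale $\varepsilon^{-(n-1)}$), so that $\mu_\varepsilon$ is the only zero of $g$ inside. On $\partial U^\varepsilon(\mu_\varepsilon)$ one has $|g|\geq c\,\rho^{m}|g^{(m)}(\mu_\varepsilon)|/m!$, a lower bound depending on $\varepsilon$ only through the slow scale $\varepsilon^{-(n-1)}$; comparing with the bound on $|h-g|$, the ratio $|h-g|/|g|$ is at most $\varepsilon^{-O(1)}\exp(\Re(\mu_\varepsilon)\sigma_{n}\varepsilon^{-n}+O(|\Re(\mu_\varepsilon)|\varepsilon^{-(n-1)}))$, which tends to $0$ because $\Re(\mu_\varepsilon)<0$ and the $\varepsilon^{-n}$ term dominates. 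Hence $|h-g|<|g|$ on $\partial U^\varepsilon(\mu_\varepsilon)$, and Rouch\'e yields that $h$ and $g$ have the same number $m$ of zeros inside, proving the case $k=n-1$. For $k<n-1$ the block $M_{11}$ has the same structural form with one fewer delay and top coefficient $A_{n-1,1}^{(n)}$ (as are the successive projected top coefficients $A_{\ell,1}^{(\ell+1)}$ for $\underline{k}\leq\ell\leq n-1$ by Definition~\ref{def:deg-spec}, singular); repeating the SVD/Schur/Rouch\'e construction with the factors $\tilde U_{\ell,1},\tilde V_{\ell,1}$ peels off delays one level at a time. Because the correction produced at level $\ell$ decays at scale $\varepsilon^{-\ell}$, strictly faster than the $\varepsilon^{-(\ell-1)}$ oscillation of $\tilde\chi_{\ell-1}^\varepsilon$, all the successive Rouch\'e comparisons hold on one common disk (shrunk finitely often if needed), and their composition identifies the number of zeros of $\chi^\varepsilon$ in $U^\varepsilon(\mu_\varepsilon)$ with the multiplicity of $\mu_\varepsilon$ as a zero of $\tilde\chi_{k}^\varepsilon$. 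At the terminal level ($k=0$, $\tilde S_{0}^{-}$) with $\det J_{1}^{(1)}=0$, condition (ND) ensures $\det(\mathcal{U}_{1}^{\ast}A_{0,1}^{(1)}\mathcal{V}_{1})\neq0$, so the fully reduced matrix $-\lambda J_{1}^{(1)}+A_{0,1}^{(1)}$ is not identically singular and $\mu_\varepsilon$ remains an isolated zero of finite multiplicity.

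The main obstacle is precisely the Rouch\'e inequality $|h-g|<|g|$: since $g$ and the correction $E$ both vary exponentially in $\varepsilon$, absolute smallness of $E$ does not suffice, and one must compare the two competing exponential rates. The resolution, which is the core of the argument, is the strict timescale ordering $\varepsilon^{-n}\gg\varepsilon^{-(n-1)}\gg\cdots$: the Schur correction at each level is controlled by the fastest surviving exponential and therefore decays faster than any loss suffered by the truncated characteristic function near its zero at the next slower scale. The one estimate demanding genuine care is the uniform lower bound for $|g|$ on the contour, i.e. ensuring that $g^{(m)}(\mu_\varepsilon)$ does not itself degenerate relative to the correction as $\varepsilon\to0$.
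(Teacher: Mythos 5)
Your proposal follows essentially the same route as the paper's proof: block-decompose $U_{n}^{\ast}\Delta^{\varepsilon}V_{n}$ along the cokernel/image splitting of $A_{n}$, use the Schur complement together with the invertibility of the block dominated by $A_{n,4}^{(n)}e^{-\lambda\sigma_{n}\varepsilon^{-n}}$ for $\Re(\lambda)<0$, apply Rouch\'e's theorem to compare with $\tilde{\chi}_{n-1}^{\varepsilon}$, iterate the construction down the hierarchy, and invoke (ND) at the terminal level $k=0$. The delicate point you single out --- the lower bound for $|\tilde{\chi}_{k}^{\varepsilon}|$ on the Rouch\'e contour relative to the exponentially small Schur correction --- is also left implicit in the paper's argument, so your treatment is, if anything, slightly more explicit about where the real work lies.
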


The following Sec.~\ref{subsec:hierarchical-splitting} shows that eigenvalues with positive real part can be approximated in a similar way. 

\subsection{Hierarchical splitting and asymptotic spectrum\label{subsec:hierarchical-splitting} }

Consider the case when there exists an eigenvalue $\lambda^\varepsilon$ with a positive real part for an arbitrary small $\varepsilon$. It is easy to see that
\begin{equation}
\left\Vert -\lambda^{\varepsilon}I+A_{0}\right\Vert \leq\sum_{k=1}^{n}\left\Vert A_{k}\right\Vert \exp\left(-\Re(\lambda^{\varepsilon})\sigma_{k}\varepsilon^{-k}\right)\label{eq:pert-0}
\end{equation}
where $\left\Vert \cdot\right\Vert $ is an induced matrix norm. If the real part of $\lambda^\varepsilon$ is uniformly bounded from zero, we have $\sum_{k=1}^{n}\left\Vert A_{k}\right\Vert \exp\left(-\Re(\lambda)\varepsilon^{-k}\right)\to0$
as $\varepsilon\to0$. Therefore, the limiting solution $\lambda^{0}=\lim_{\varepsilon\to0}\lambda^{\varepsilon}$
is an eigenvalue of $A_{0}$ with positive real part (if it exists).
This suggests that part of the spectrum (the so-called strong unstable
spectrum, see Definition~\ref{def:strong}) with this specific scaling
property can be approximated by eigenvalues of $A_{0}$ with positive
real part, and we expect an error which is exponentially small in
$\varepsilon$ as $\varepsilon\to0$ (Theorem \ref{thm:spec-approx}). 
\begin{definition}
\label{def:strong}Let 
\[
S_{0}:=\left\{ \lambda\in\mathbb{C}\,|\,\,\det\left[-\lambda I+A_{0}\right]=0\right\} .
\]
The set
\begin{equation}
S_{0}^{+}:=S_{0}\cap\left\{ \lambda\in\mathbb{C}\,|\,\Re(\lambda)>0\right\} ,\label{eq:asympt-strong-spec}
\end{equation}
is called the \textit{asymptotic strong unstable spectrum} and the set 

\begin{equation}\label{eq:asympt-strong-spec-1}
\mathcal{A}_{0}:=S_{0}^{+}\cup\tilde{S}_{0}^{-}
\end{equation}
is called the \textit{asymptotic strong spectrum}. Let $\mathcal{B}_{r}(X)=\bigcup_{x\in X}\left\{ z\in\mathbb{C}|\,\left|z-x\right|<r\right\} $
denote the set of balls around a set $X\subset\mathbb{C}.$ Let $r_{0}:=\min\left\{ \left|\lambda-\mu\right|\ \negmedspace,\lambda,\mu\in S_{0},\lambda\ne\mu\right\} $
and $$r:=\frac{1}{3}\min\left\{ r_{0},\mbox{dist}(S_{0},i\mathbb{R})\right\} ,$$
then the sets 
\begin{equation}
\Sigma_{su}^{\varepsilon}:=\Sigma^{\varepsilon}\cap\mathcal{B}_{r}(S_{0}^{+}),\quad\Sigma_{ss}^{\varepsilon}:=\Sigma^{\varepsilon}\cap\mathcal{B}_{r}(\tilde{S}_{0}^{-}),\quad\Sigma_{s}^{\varepsilon}:=\Sigma^{\varepsilon}\cap\mathcal{B}_{r}(\mathcal{A}_{0})\label{eq:strong-spec}
\end{equation}
are called \textit{strong unstable spectrum}, \textit{strong stable spectrum}
and \textit{strong spectrum}, respectively. The set 
\begin{equation}\label{eq:pseudocont-spectrum}
\Sigma_{c}^{\varepsilon}:=\Sigma^{\varepsilon}\setminus\Sigma_{s}^{\varepsilon}
\end{equation}
is called the \textit{pseudo-continuous spectrum.}
\end{definition}
$S_{0}$ can be obtained by formal truncation of the characteristic
equation after $A_{0}$, i.e. neglecting the terms including delays. Note that Sec.~\ref{subsec:Low-Rank-perturbations} provides conditions under which is possible that also specific eigenvalues with negative real part
can be approximated by eigenvalues of $A_{0}$ (see Theorem~\ref{thm:spec-approx}.\ref{thm:spec-approx-stable-singular}).
Analogously, one defines the following truncated expressions of higher
order: Similar to our observation above, we have a splitting of the
spectral subsets with respect to the different time scales corresponding
to the hierarchy of delays. Consider an eigenvalue with real part $\Re(\lambda) = \gamma \varepsilon^k$ asymptotically as $\varepsilon\to0$ and $\gamma >0 $.
Then $\Delta^{\varepsilon}(\lambda)$ has the leading order representation
\[
-i\Im(\lambda)I+A_{0}+\sum_{j=1}^{k-1}A_{j}\exp\left(-i\sigma_{j}\varepsilon^{k-j}\Im(\lambda)\right)+A_{k}\exp\left(-\sigma_{k}\gamma-i\sigma_{k}\Im(\lambda))\right)
\]
as $\varepsilon\to 0$. This observation motivates the following definitions. 
\begin{definition}
\label{def:trunc-spec}Define the functions $\chi_{1}:\mathbb{R\times C}\to\mathbb{C}$,
$\chi_{k}:\mathbb{R\times S}^{k-1}\times\mathbb{C}\to\mathbb{C}$,
$k=2,\dots,n,$ 
\begin{eqnarray}
\chi_{1}\left(\omega;Y\right) & := & \det\left(-i\omega I+A_{0}+A_{1}Y\right),
\label{eq:ce-t1}\\
\chi_{k}\left(\omega,\varphi_{1},\dots,\varphi_{k-1};Y\right) & := & \det\left(-i\omega I+A_{0}+\sum_{j=1}^{k-1}A_{j}e^{-i\sigma_j\varphi_{j}}+A_{k}Y\right),
\label{eq:ce-tk}
\end{eqnarray}
and the corresponding asymptotic spectra
\begin{eqnarray*}
S_{1} & := & \left\{ \gamma+i\omega\in\mathbb{C}\,|\,\exists\psi\in\mathbb{R}:\,\chi_{1}\left(\omega,e^{-\sigma_{1}\gamma-i\psi}\right)=0\right\} ,\\
S_{k} & := & \left\{ \gamma+i\omega\in\mathbb{C}\,|\,\exists\psi,\varphi_{1},\dots,\varphi_{k-1}\in\mathbb{R}:\,\chi_{k}\left(\omega,\varphi_{1},\dots,\varphi_{k-1},e^{-\sigma_{k}\gamma-i\psi}\right)=0\right\} .
\end{eqnarray*}
The sets
\begin{equation}\label{eq:asymptotic-continuous-unstable-spec}
S_{k}^{+}:=S_{k}\bigcap\left\{ \lambda\in\mathbb{C}\,|\,\Re(\lambda)>0\right\} 
\end{equation}
are called \textit{the asymptotic continuous unstable $\tau_{k}$-spectrum}
for all $k=1,\dots,n,$ respectively. $S_{n}$ is called the asymptotic
continuous $\tau_{n}$-spectrum. 

If $\det A_{n}=0$, additionally
define $\tilde{\chi}_{k}:\mathbb{R\times S}^{k-1}\times\mathbb{C}\to\mathbb{C}$,
$k=\underline{k},\dots,n-1$ 
\[
\tilde{\chi}_{k}\left(\omega,\varphi_{1},\dots,\varphi_{k-1};Y\right):=\det\left[-i\omega J_{1}^{(k+1)}+A_{0,1}^{(k+1)}+\sum_{j=1}^{k-1}A_{k,1}^{(k+1)}e^{-i\sigma_j\varphi_{j}}+A_{k,1}^{(k+1)}Y\right],
\]
and if $\underline{k}=1$, $\tilde{\chi}_{1}:\mathbb{R\times C}\to\mathbb{C}$,
\begin{eqnarray*}
\tilde{\chi}_{1}\left(\omega;Y\right) & := & \det\left(-i\omega J_{1}^{(2)}+A_{0,1}^{(2)}+A_{1,1}^{(2)}Y\right),
\end{eqnarray*}
and the corresponding asymptotic continuous spectra
\begin{eqnarray*}
\tilde{S}_{1} & := & \left\{ \gamma+i\omega\in\mathbb{C}\,|\,\exists\psi\in\mathbb{R}:\,\tilde{\chi}_{1}\left(\omega,e^{-\sigma_{1}\gamma-i\psi}\right)=0\right\} ,\\
\tilde{S}_{k} & := & \left\{ \gamma+i\omega\in\mathbb{C}\,|\,\exists\psi,\varphi_{1},\dots,\varphi_{k-1}\in\mathbb{R}:\,\tilde{\chi}_{k}\left(\omega,\varphi_{1},\dots,\varphi_{k-1},e^{-\sigma_{k}\gamma-i\psi}\right)=0\right\} .
\end{eqnarray*}
The sets
\begin{equation}\label{asymptotic-continuous-stable-spec}
\tilde{S}_{k}^{-}:=\tilde{S}_{k}\bigcap\left\{ \lambda\in\mathbb{C}\,|\,\Re(\lambda)<0\right\} 
\end{equation}
are called the \textit{asymptotic continuous stable $\tau_{k}$-spectrum} for
all $k=\underline{k},\dots,n-1$ respectively. 

\begin{equation}\label{asymptotic-continuous-spectrum}
\mathcal{A}_{k}:=S_{k}^{+}\cup\tilde{S}_{k}^{-},\,1\leq k<n,\qquad\mathcal{A}_{n}:=S_{n}
\end{equation}
are called \textit{asymptotic continuous $\tau_{k}$-spectra}. Additionally, for fixed $1\leq k<n$, consider the scaling
function $\Pi_{\varepsilon}^{(k)}:\mathbb{C}\to\mathbb{C},$
\begin{equation}
\Pi_{\varepsilon}^{(k)}(a+ib):=a\varepsilon^{-k}+ib.\label{eq:scaling-f}
\end{equation}
We define the corresponding spectral subsets 
\[
\Sigma_{k,\nu}^{\varepsilon}:=\left\{ \lambda\in\Sigma^{\varepsilon}\,|\,\mbox{dist}\left(\Pi_{\varepsilon}^{(k)}(\lambda),\mathcal{A}_{k}\right)<\nu,|\Re(\lambda)|>\nu\right\} ,\quad k=1,\dots,n-1,
\]
\[
\Sigma_{n,\nu}^{\varepsilon}:=\Sigma^{\varepsilon}\setminus\bigcup_{k=0}^{n-1}\Sigma_{k,\nu}^{\varepsilon}.
\]
of the pseudo continuous spectrum.
\end{definition}
The following Theorem contains our main result. We show that as $\varepsilon\to0$
the strong spectrum $\Sigma_{s}^{\varepsilon}$ converges to the asymptotic
strong spectrum $\mathcal{A}_{0}$ and the pseudo-continuous spectrum
converges to the imaginary axis. The rescaled spectral sets
$\Pi_{\varepsilon}^{(k)}(\Sigma_{k,\nu}^{\varepsilon})$ converge
to the sets given by asymptotic continuous $\tau_{k}$-spectra $\mathcal{A}_{k}$. Recall Defs.~\ref{def:strong} and \ref{def:trunc-spec}.
\begin{theorem}[spectrum approximation]
\label{thm:spec-approx}Assume (ND). 
\begin{enumerate}
\item[(i)] \label{thm:spec-approx-strong-unst}Let $\mu\in S_{0}^{+}$. Then
for $0<\delta\le r$ there exists $\varepsilon_{0}>0$ such that for
$0<\varepsilon<\varepsilon_{0}$ the number of eigenvalues in $\Sigma^{\varepsilon}\cap\mathcal{B}_{\delta}(\mu)$
counting multiplicities equals the multiplicity of $\mu$ as an
eigenvalue of $A_{0}$. 
\item[(ii)]  \label{thm:spec-approx-stable-singular}Let $\mu\in\tilde{S}_{0}^{-}$.
Then for $0<\delta\le r$ there exists $\varepsilon_{0}>0$ such that
for $0<\varepsilon<\varepsilon_{0}$ the number of eigenvalues in
$\Sigma^{\varepsilon}\cap\mathcal{B}_{\delta}(\mu)$ counting
multiplicities equals the multiplicity of $\mu$ as a solution of $\tilde{\chi}_{0}(\mu)=0$.
\item[(iii)]  \label{thm:spec-approx-weak-unst} Let $k=1,\dots,n$ and $\chi_{k}$
be nontrivial. For $\mu\in\mathcal{A}_{k}$, and $\delta>0$ there
exists $\varepsilon_{0}>0$ such that for $0<\varepsilon<\varepsilon_{0}$
there exists $\lambda\in\Sigma_{k,\delta}^{\varepsilon}\subset\Sigma^{\varepsilon}$
such that $\left|\Pi_{\varepsilon}^{(k)}(\lambda)-\mu\right|<\delta$.
\item[(iv)]  \label{thm:spec-approx-stable}Let $R>0$. For $0<\delta>0$ there
exists $\varepsilon_{0}>0$ such that for $0<\varepsilon<\varepsilon_{0}$
and $\lambda\in\Sigma_{c}^{\varepsilon}$ with $\left|\Im\left(\lambda\right)\right|<R$,
we have $\left|\Re\left(\lambda\right)\right|<\delta$ and there exists
$1\le k\le n$ and $\mu\in\mathcal{A}_{k}$ such that $\left|\Pi_{\varepsilon}^{(k)}(\lambda)-\mu\right|<\delta$.
\end{enumerate}
\end{theorem}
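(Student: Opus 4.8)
The plan is to prove the four assertions separately, reusing the machinery already in place. For (i) I would fix \(\mu\in S_0^+\) and work on the circle \(|\lambda-\mu|=\delta\le r\). By the choice of \(r\) this circle is bounded away from \(i\mathbb{R}\) and encloses no other point of \(S_0\), so \(\det(-\lambda I+A_0)\) is bounded below on it and has exactly \(m\) zeros inside, with \(m\) the algebraic multiplicity of \(\mu\). Since \(\Re(\lambda)\ge\Re(\mu)-\delta>0\) there, estimate (\ref{eq:pert-0}) shows every delay term \(A_k\exp(-\lambda\sigma_k\varepsilon^{-k})\) is \(O(\exp(-c\varepsilon^{-1}))\) uniformly on the circle, so \(\chi^\varepsilon=\det\Delta^\varepsilon\) differs from \(\det(-\lambda I+A_0)\) by an exponentially small quantity; Rouché's theorem then yields exactly \(m\) elements of \(\Sigma^\varepsilon\) inside for all small \(\varepsilon\). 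Assertion (ii) I would read off from Theorem~\ref{thm:degeneracy-spec} specialised to \(k=0\): since \(\tilde\chi_0(\lambda)=\det(-\lambda J_1^{(1)}+A_{0,1}^{(1)})\) is \(\varepsilon\)-independent, a fixed \(\mu\in\tilde S_0^-\) is a zero of constant multiplicity, and that theorem matches the count of nearby eigenvalues.

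The core is assertion (iii). Fix \(\mu=\gamma+i\omega\in\mathcal{A}_k\) in the unstable case \(\gamma>0\), with phases \(\psi,\varphi_1,\dots,\varphi_{k-1}\) witnessing \(\chi_k(\omega,\varphi_1,\dots,\varphi_{k-1},e^{-\sigma_k\gamma-i\psi})=0\), so that \(M:=-i\omega I+A_0+\sum_{j<k}A_j e^{-i\sigma_j\varphi_j}+A_k e^{-\sigma_k\gamma-i\psi}\) is singular. I would look for an eigenvalue \(\lambda=\gamma\varepsilon^k+i\omega'\) with \(\omega'\) near \(\omega\). For such \(\lambda\) the terms with \(j>k\) carry exponent \(-\sigma_j\gamma\varepsilon^{k-j}\to-\infty\) and drop out, the term \(j=k\) is \(A_k e^{-\sigma_k\gamma-i\sigma_k\varepsilon^{-k}\omega'}\), and the terms \(j<k\) reduce at leading order to rotations \(A_j e^{-i\sigma_j\varepsilon^{-j}\omega'}\); thus \(\Delta^\varepsilon(\lambda)\) equals, up to \(o(1)\), the matrix \(M\) with \(\varphi_j,\psi\) replaced by the realised phases \(\sigma_j\varepsilon^{-j}\omega'\bmod 2\pi\).

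It remains to choose \(\omega'\in(\omega-\delta,\omega+\delta)\) realising the witnessing phases, and here I would exploit the hierarchy \(\varepsilon^{-1}\ll\cdots\ll\varepsilon^{-k}\) through a nested-interval argument rather than any Diophantine hypothesis. Over the \(O(1)\) window the slowest frequency \(\sigma_1\varepsilon^{-1}\) sweeps \(O(\varepsilon^{-1})\) periods, so one selects a subinterval \(I_1\) of length \(O(\varepsilon)\) on which \(\sigma_1\varepsilon^{-1}\omega'\approx\varphi_1\); within \(I_1\) the frequency \(\sigma_2\varepsilon^{-2}\) still sweeps \(O(\varepsilon^{-1})\) periods, giving \(I_2\subset I_1\) of length \(O(\varepsilon^2)\) that fixes \(\varphi_2\) while \(\varphi_1\) drifts only by \(O(\varepsilon)\); iterating down to the fast phase \(\psi\) produces \(\omega'\) matching all phases to a prescribed tolerance \(\delta'\). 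With the phases matched, \(\Delta^\varepsilon(\gamma\varepsilon^k+i\omega')\approx M\) is near-singular; letting \(\eta(\lambda)\) denote the eigenvalue branch through the (assumed simple) zero eigenvalue of \(M\), the derivative \(\partial_\lambda\eta\) is dominated by the \(j=k\) contribution \(-\sigma_k\varepsilon^{-k}A_k e^{-\lambda\sigma_k\varepsilon^{-k}}\), hence \(\asymp\varepsilon^{-k}\) by nontriviality of \(\chi_k\). A Newton/implicit-function step then corrects \(\lambda\) by \(O(\delta'\varepsilon^k)\) to solve \(\eta(\lambda)=0\), which keeps \(\Pi_\varepsilon^{(k)}(\lambda)\) within \(\delta\) of \(\mu\) once \(\delta'\) and \(\varepsilon\) are small. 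The stable case \(\mu\in\tilde S_k^-\) runs identically after replacing \(\Delta^\varepsilon\) by its projection from Definition~\ref{def:deg-spec}, which is exactly what tames the otherwise divergent \(j>k\) terms.

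For the completeness statement (iv) I would argue by contradiction and compactness. Given \(\lambda\in\Sigma_c^\varepsilon\) with \(|\Im\lambda|<R\), I first rule out \(|\Re\lambda|\) bounded away from \(0\): a positive bounded-away real part forces, via (\ref{eq:pert-0}), convergence to \(S_0^+\) and hence membership in \(\Sigma_s^\varepsilon\), contradicting \(\lambda\in\Sigma_c^\varepsilon\), while a uniformly negative real part is excluded by the corresponding projected estimate together with Condition (ND). Once \(|\Re\lambda|<\delta\), I locate the scale by choosing \(k\) with \(\Re(\lambda)\asymp\varepsilon^k\) and show that, for the phases realised by \(\lambda\), the leading-order reduction of \(\det\Delta^\varepsilon(\lambda)=0\) is precisely \(\chi_k=0\) (resp.\ \(\tilde\chi_k=0\)), so \(\Pi_\varepsilon^{(k)}(\lambda)\) lies within \(\delta\) of \(\mathcal{A}_k\). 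I expect the main obstacle to be exactly this last step: controlling the transition regimes where \(\Re(\lambda)\) sits between two consecutive scales \(\varepsilon^{k+1}\) and \(\varepsilon^{k}\), so that no single truncation dominates, and simultaneously handling the degenerate directions, where the large derivatives \(\varepsilon^{-j}\) must be balanced against the vanishing rank of the projected matrices. Making the \(o(1)\) reductions uniform across these windows, and checking that Condition (ND) forbids a spurious collapse to an ODE, is where the bulk of the technical work will lie.
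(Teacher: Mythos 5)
Parts (i) and (ii) of your proposal match the paper's argument: uniform convergence of $\chi^{\varepsilon}$ to $\det(-\lambda I+A_{0})$ on a disk bounded away from the imaginary axis plus Rouch\'e/Hurwitz, and reduction of (ii) to Theorem~\ref{thm:degeneracy-spec}. For (iii), your nested-interval phase matching is in substance the same device as the paper's recursive integer sequences $\Psi_{j}(\varepsilon)$ of Eqs.~(\ref{eq:psi1})--(\ref{eq:psidef}) and Lemma~\ref{lem:1}. The weak point is the final Newton/implicit-function step: it requires the zero eigenvalue of $M$ to be simple and the corresponding root $Y$ of $\chi_{k}(\cdots;Y)$ to be nondegenerate, neither of which is guaranteed. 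The paper avoids both by applying Hurwitz's theorem to the rescaled function $f_{\varepsilon}^{(k)}(z)=\chi^{\varepsilon}\left(\varepsilon^{k}(z+i2\pi\Psi_{k}(\varepsilon))\right)$, which counts zeros with multiplicity in a small disk and needs only that the zero of $z\mapsto\chi_{k}(\cdots;e^{-\sigma_{k}z})$ be isolated, which follows from nontriviality of $\chi_{k}$. Replacing your Newton step by Rouch\'e on the rescaled function repairs (iii).

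The genuine gap is in (iv), and you point at it yourself. Choosing ``$k$ with $\Re(\lambda)\asymp\varepsilon^{k}$'' presupposes that the real part locks onto one of the $n$ scales, but nothing prevents a sequence $\lambda_{m}$ with, say, $\varepsilon_{m}^{k}\ll|\Re(\lambda_{m})|\ll\varepsilon_{m}^{k-1}$, for which no single truncation $\chi_{k}$ is the leading-order reduction of $\det\Delta^{\varepsilon}(\lambda)=0$. The paper closes exactly this case in Lemma~\ref{lem:2}(c): if $\Pi_{\varepsilon}^{(k)}(\lambda)$ diverges while $\Pi_{\varepsilon}^{(k-1)}(\lambda)\to i\omega_{0}$, then $\chi_{k}(\omega_{0},\varphi_{1},\dots,\varphi_{k-1};0)=0$, i.e.\ $i\omega_{0}$ is a singular point of a spectral manifold $\gamma_{l}^{(k)}$ and simultaneously a zero point of $\gamma_{m}^{(k-1)}$; this is precisely the correspondence established in Theorem~\ref{thm:spec-mnf}(ii)--(iii), and it is what places the limit within $\delta$ of $\mathcal{A}_{k}$ even in the transition regime. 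Without proving this dichotomy (the analogue of Lemma~\ref{lem:2}) your compactness argument does not close. A similar remark applies to excluding $\Re(\lambda)$ uniformly negative: the paper does this in Lemma~\ref{lem:3} via the asymptotics $e^{-n\sigma_{n}\lambda\varepsilon^{-n}}\det A_{n}$ together with Theorem~\ref{thm:degeneracy-spec} when $\det A_{n}=0$; your appeal to ``the corresponding projected estimate together with Condition (ND)'' is the right idea but must be spelled out along those lines, since the naive estimate fails exactly on the degenerate directions.
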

Theorem~\ref{thm:spec-approx} has several implications for the
stability of Eq.~(\ref{eq:DDE}) for sufficiently large values of
the delays. In particular, if the asymptotic unstable spectra $S_{k}^{+}$
are empty for all $0\leq k\leq n$, then Eq.~(\ref{eq:DDE}) is asymptotically
stable. By construction, we have $\mathcal{A}_{k}\subset S_{k}$. As a result, we
can explore the structure of the sets $S_{k},$ without knowing $\tilde{S}_{k}^{-}$
explicitly, but keep in mind that there are $\mu\in S_{k}$ with $\mu\notin\mathcal{A}_k$.

In Sec.~\ref{subsec:Spectral-manifolds}, we provide explicit formulas
for the sets $S_{k}$ (and therefore $\mathcal{A}_{k}$) and introduce the concept
of a spectral manifold. The presented results will clarify the structure
of the asymptotic spectrum. 

\subsection{Spectral manifolds\label{subsec:Spectral-manifolds} }
We introduce the notion of spectral manifolds as solutions to
\begin{equation}
\chi_{k}\left(\omega,\varphi_{1},\dots,\varphi_{k-1};Y\right)=0,\quad1\leq k\leq n\label{eq:cheq-k}
\end{equation}
(compare $S_{k}$ in Definition~\ref{def:trunc-spec}). Equation
(\ref{eq:cheq-k}) can be thought of as a polynomial in $Y$ of degree
$d_{k}=\text{rank}A_{k}$. To start with, let us assume $d_{k}=d$.
For fixed $(\omega,\varphi_{1},\dots,\varphi_{k-1})\in\mathbb{R\times S}^{k-1},$
this equation has $d$ complex roots. Thus, there exist $d$ continuous
functions $Y_{l}^{(k)}:\,\mathbb{R\times S}^{k-1}\to\mathbb{C}$
\[
\chi_{k}\left(\omega,\varphi_{1},\dots,\varphi_{k-1};Y_{l}^{(k)}(\omega,\varphi_{1},\dots,\varphi_{k-1})\right)=0
\]
 for $1\le l\le d$. One defines 
\[
\gamma_{l}^{(k)}(\omega,\varphi_{1},\dots,\varphi_{k-1}):=-\frac{1}{\sigma_{k}}\ln\left|Y_{l}^{(k)}(\omega,\varphi_{1},\dots,\varphi_{k-1})\right|
\]
and extend it continuously onto $\mathbb{R}$ with values in $\mathbb{R}\cup\{-\infty,\infty\}$.
The functions $\gamma_{1}^{(k)},\dots,\gamma_{d_{k}}^{(k)}$ are called
spectral manifolds of $S_{k}$. They can be obtained from straightforward
computation, analytically in many cases. If $d_{k}<d$, that is $A_{k}$
has not full rank, spectral manifolds can become locally or globally
degenerate; they might seize to exits for certain parameter values.
In this case, denote $U_{k,1},V_{k,1}\in\mathbb{C}^{d\times(d-d_{k})}$
the matrices containing the left and right singular vectors of $A_{k}$
corresponding to the singular value zero.
\begin{theorem}[Spectral manifolds]
\label{thm:spec-mnf}Assume (ND) and let $1\le k\le n$ be fixed
with $d_{k}:=\text{rank}\,A_{k}$. Then, 
\begin{enumerate}
\item[(i)] There exist $d_{k}$ continuous functions $\gamma_{1}^{(k)},\dots,\gamma_{d_{k}}^{(k)}:\mathbb{R}^{k}\to\mathbb{R}\cup\{-\infty,\infty\}$
such that 
\begin{eqnarray*}
S_{k} & = & \bigcup_{l=1}^{d_{k}}\left\{ \gamma_{l}^{(k)}(\omega,\varphi_{1},\dots,\varphi_{k-1})+i\omega,\ (\omega,\varphi_{1},\dots,\varphi_{k-1})\in\mathbb{R\times S}^{k-1},\right.\\
 &  & \left.\qquad\,\gamma_{l}^{(k)}(\omega,\varphi_{1},\dots,\varphi_{k-1})\notin\{-\infty,\infty\}\right\} .
\end{eqnarray*}
\item[(ii)]\label{thm:spec-mnf-singularities+}If $d_{k}<d$, for any
$(\omega,\varphi_{1},\dots,\varphi_{k-1})\in\mathbb{R\times S}^{k-1}$
with
\begin{equation}
\det\left(U_{k,1}^{\ast}\left[-i\omega I+A_{0}+\sum_{j=1}^{k-1}A_{j}e^{-i\sigma_j\varphi_{j}}\right]V_{k,1}\right)\neq0,\label{eq:sing1}
\end{equation}
there exists $l\in\{1,\dots,d_{k}\}$ such that the following holds
true:\linebreak{}
 $\gamma_{l}^{(k)}(\omega,\varphi_{1},\dots,\varphi_{k-1})=\infty$,
if and only if 
\begin{equation}
\det\left(-i\omega I+A_{0}+\sum_{j=1}^{k-1}A_{j}e^{-i\sigma_j\varphi_{j}}\right)=0.\label{eq:sing2}
\end{equation}
If $k<n$, the set of zero points of the spectral manifold $\gamma_{l}^{(k)}$
coincides with the set of singular points of the spectral manifold
$\gamma_{m}^{(k+1)}$ for some $m\in\{1,\dots,d_{k+1}\}$.
\item[(iii)]\label{thm:spec-mnf-singularities-}If $d_{k}<d$, for any
$(\omega,\varphi_{1},\dots,\varphi_{k-1})\in\mathbb{R\times S}^{k-1}$
with
\[
\det\left(-i\omega I+A_{0}+\sum_{j=1}^{k-1}A_{j}e^{-i\sigma_j\varphi_{j}}\right)\ne0,
\]
there exists $l\in\{1,\dots,d_{k}\}$ such that the following holds
true:\linebreak{}
 $\gamma_{l}^{(k)}(\omega,\varphi_{1},\dots,\varphi_{k-1})=-\infty$,
if and only if
\[
\det\left(U_{k,1}^{\ast}\left[-i\omega I+A_{0}+\sum_{j=1}^{k-1}A_{j}e^{-i\sigma_j\varphi_{j}}\right]V_{k,1}\right)=0.
\]
\end{enumerate}
\end{theorem}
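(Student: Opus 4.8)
The plan is to treat, for each fixed $k$, the function $\chi_{k}(\omega,\varphi_{1},\dots,\varphi_{k-1};Y)$ as a polynomial in the single scalar variable $Y$ whose coefficients depend continuously on $(\omega,\varphi_{1},\dots,\varphi_{k-1})$, to read off its degree together with its leading and constant coefficients by a singular value decomposition of $A_{k}$, and then to translate every statement about the roots $Y_{l}^{(k)}$ into a statement about $\gamma_{l}^{(k)}=-\sigma_{k}^{-1}\ln|Y_{l}^{(k)}|$. Concretely, substituting $A_{k}=U_{k}\,\mathrm{diag}(0,A_{k,4}^{(k)})\,V_{k}^{\ast}$ and using unitarity gives, with $M:=-i\omega I+A_{0}+\sum_{j=1}^{k-1}A_{j}e^{-i\sigma_{j}\varphi_{j}}$ and $N:=U_{k}^{\ast}MV_{k}$,
\[
\chi_{k}=\det(U_{k})\det(V_{k}^{\ast})\,\det\!\begin{pmatrix}N_{11} & N_{12}\\ N_{21} & N_{22}+YA_{k,4}^{(k)}\end{pmatrix},
\]
where $N_{11}=U_{k,1}^{\ast}MV_{k,1}$ is the $(d-d_{k})\times(d-d_{k})$ block. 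Expanding by multilinearity in the last $d_{k}$ columns shows $\chi_{k}$ is a polynomial in $Y$ of degree at most $d_{k}$, whose coefficient of $Y^{d_{k}}$ equals, up to the nonzero factor $\det U_{k}\det V_{k}^{\ast}\det A_{k,4}^{(k)}$, the determinant $\det N_{11}=\det(U_{k,1}^{\ast}MV_{k,1})$, and whose constant term is $\chi_{k}|_{Y=0}=\det M$. This one computation is the engine for all three parts: the leading coefficient is governed by (\ref{eq:sing1}), the constant term by (\ref{eq:sing2}).

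For (i) I would homogenize $\chi_{k}$ in $Y$ to a genuine degree-$d_{k}$ binary form and invoke continuous dependence of roots on coefficients on the Riemann sphere $\widehat{\mathbb{C}}$; lifting the angular variables to $\mathbb{R}^{k}$, which is simply connected, yields $d_{k}$ globally continuous branches $Y_{1}^{(k)},\dots,Y_{d_{k}}^{(k)}$ in $\widehat{\mathbb{C}}$ and hence $d_{k}$ continuous maps $\gamma_{l}^{(k)}:\mathbb{R}^{k}\to\mathbb{R}\cup\{-\infty,\infty\}$. The set identity for $S_{k}$ is then a direct unwinding of Definition~\ref{def:trunc-spec}: a value $Y=e^{-\sigma_{k}\gamma-i\psi}$ solves $\chi_{k}=0$ for some phase $\psi$ if and only if $|Y|=e^{-\sigma_{k}\gamma}$ equals $|Y_{l}^{(k)}|$ for some $l$, i.e. if and only if $\gamma=\gamma_{l}^{(k)}$; finite nonzero roots correspond exactly to $\gamma_{l}^{(k)}\notin\{\pm\infty\}$, giving the claimed union.

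Parts (ii) and (iii) are the two boundary cases of this dictionary. A branch satisfies $\gamma_{l}^{(k)}=+\infty$ precisely when $Y_{l}^{(k)}$ hits $0$, which by the constant-term formula happens if and only if $\det M=0$, i.e. (\ref{eq:sing2}); the hypothesis (\ref{eq:sing1}) keeps the polynomial at full degree $d_{k}$ there, so exactly one branch is absorbed at $0$. Dually, $\gamma_{l}^{(k)}=-\infty$ means a root escapes to $\infty$, which happens if and only if the leading coefficient degenerates, $\det(U_{k,1}^{\ast}MV_{k,1})=0$, giving (iii); here the complementary assumption $\det M\neq0$ prevents $Y=0$ from being a root, so the lost degree is accounted for by a single branch at $\infty$. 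The remaining cross-level assertion in (ii) I would obtain by matching these two determinantal loci between consecutive levels: inserting $M_{k}=M+A_{k}e^{-i\sigma_{k}\varphi_{k}}$ and passing to the projections $U_{k+1,1},V_{k+1,1}$, the $+\infty$-locus $\{\det M=0\}$ of $\gamma_{l}^{(k)}$ is identified with the leading-coefficient degeneracy governing $\gamma_{m}^{(k+1)}$, using the recursively defined coefficient matrices $A_{j,1}^{(k+1)}=U_{k+1,1}^{\ast}A_{j}V_{k+1,1}$ of Definition~\ref{def:deg-spec}.

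The main obstacle I anticipate is twofold. Analytically, the delicate point in (i) is the behaviour at the degeneracy loci where the leading coefficient or the constant term vanishes: continuity of the individual branches must be argued on $\widehat{\mathbb{C}}$ rather than on $\mathbb{C}$, and one must verify that collisions of roots do not destroy a global continuous labelling, which is exactly where simple connectivity of the lifted base $\mathbb{R}^{k}$ is used. Combinatorially, the genuinely fiddly step is the cross-level identification in (ii): tracking how the singular-vector projections of $A_{k+1}$ act on the lower-order terms $A_{0},\dots,A_{k}$, so that the $+\infty$-locus at level $k$ and the singular locus at level $k+1$ are shown to be the same set, requires careful bookkeeping with the nested matrices of Definition~\ref{def:deg-spec} and is where I expect to spend most of the effort.
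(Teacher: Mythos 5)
Your core computation---the singular value decomposition of $A_k$, the block form of $\chi_k$ in terms of $B_k=-i\omega I+A_0+\sum_{j}A_je^{-i\sigma_j\varphi_j}$, the leading coefficient $\det(U_{k,1}^{\ast}B_kV_{k,1})\det\tilde A_k$ of the degree-$d_k$ polynomial in $Y$ and its constant term $\det B_k$---is exactly the engine of the paper's proof, and your treatment of (i), of the $+\infty$ case in (ii), and of the $-\infty$ case in (iii) follows the same route. (The paper packages the roots-at-infinity analysis of (iii) as a reciprocal polynomial $q_k(Z)$ with $q_k(Z)=Z^{d-d_k}\det Q_k(Z)\,\tilde q_k(Z)$ via a Schur complement; this is your homogenization to the Riemann sphere in different clothing.)

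The one genuine misstep is the cross-level assertion at the end of (ii). You propose to identify the $+\infty$-locus $\{\det B_k=0\}$ of $\gamma_l^{(k)}$ with the leading-coefficient degeneracy (i.e.\ the $-\infty$-locus) of $\gamma_m^{(k+1)}$ via the projections $U_{k+1,1},V_{k+1,1}$. That is the wrong pair of sets: the theorem matches the \emph{zero} set of $\gamma_l^{(k)}$ (where $|Y_l^{(k)}|=1$) with the \emph{singular} ($+\infty$) set of $\gamma_m^{(k+1)}$ (where the constant term of $\chi_{k+1}$ vanishes), and there is no reason the two loci you name should coincide. The correct identification requires no projections of $A_{k+1}$ at all; it is the one-line identity
\[
\chi_{k}\left(\omega,\varphi_{1},\dots,\varphi_{k-1};e^{i\varphi_{k}}\right)=\chi_{k+1}\left(\omega,\varphi_{1},\dots,\varphi_{k-1},\varphi_{k};0\right),
\]
which says that a unimodular root at level $k$ (a zero of $\gamma_l^{(k)}$) is precisely a vanishing constant term at level $k+1$ (a pole of $\gamma_m^{(k+1)}$ by part (ii) applied at level $k+1$). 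A smaller caveat: simple connectivity of $\mathbb{R}^{k}$ does not by itself yield a continuous labelling of the individual roots of a polynomial whose coefficients are merely continuous (consider $Y^{2}=\omega_{1}+i\omega_{2}$ near the origin); what saves (i) is that only the moduli $|Y_l^{(k)}|$ enter $\gamma_l^{(k)}$, and these can be chosen continuously, e.g.\ by ordering---the paper is equally terse on this point, so I flag it only as something to tighten, not as a divergence from the published argument.
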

Generically, the set of zero points of the spectral manifold $\gamma_{l}^{(k)}(\omega,\varphi_{1},\dots,\varphi_{k-1})$
is locally a $k-1$ dimensional manifold, and a set of singular points
is locally a $k-2$ dimensional manifold. The case $k=1$ is studied
in \cite{Lichtner2011}, and it is shown that the singularity of a
spectral curve $\gamma^{(1)}$ can be only observed changing one additional
parameter. For the case $k>1$, the singularity of $S_{k}^{+}$ is
generically expected when the asymptotic unstable spectrum $S_{k-2}^{+}$
is nonempty. The following Corollary is an immediate consequence
of Theorems \ref{thm:degeneracy-spec},\ref{thm:spec-approx} and \ref{thm:spec-mnf}.
\begin{corollary}
\label{coro:stability}Assume (ND). (i) If all spectral manifolds
$S_{k}$, $k=1,\dots,n$ are in the negative half-plane, i.e. $\gamma_{l}^{(k)}<0$
for all $\omega,\varphi_{1},\dots,\varphi_{k-1}\in\mathbb{R}$ and
$S_{0}^{+}=\emptyset$, then there exists $\varepsilon_{0}>0$ such
that for $0<\varepsilon<\varepsilon_{0}$, $x\equiv0$ is
exponentially stable in Eq.~(\ref{eq:DDE}).\\
(ii) If some spectral manifold admits positive value, i.e. $\gamma_{l}^{(k)}>0$
for some $0<l\le k\le n$ and $\omega,\varphi_{1},\dots,\varphi_{k-1}\in\mathbb{R}$,
or $S_{0}^{+}\ne\emptyset$, then there exists $\varepsilon_{0}>0$
such that for $0<\varepsilon<\varepsilon_{0}$, $x\equiv0$ is exponentially unstable in Eq.~(\ref{eq:DDE}). 
\end{corollary}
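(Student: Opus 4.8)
The plan is to reduce exponential (in)stability of $x\equiv0$ to the sign of the spectral abscissa $\alpha(\varepsilon):=\sup\{\Re(\lambda):\lambda\in\Sigma^{\varepsilon}\}$: for the linear autonomous equation~(\ref{eq:DDE}), $x\equiv0$ is exponentially stable iff $\alpha(\varepsilon)<0$ and exponentially unstable iff $\alpha(\varepsilon)>0$ \cite{Hale1993}. Since each strip $\{\Re(\lambda)\ge\alpha\}$ contains only finitely many eigenvalues, it suffices to control eigenvalues with nonnegative real part, and here the first ingredient is an a~priori bound. From (\ref{eq:pert-0}), if $\Re(\lambda)\ge0$ then the smallest singular value of $-\lambda I+A_{0}$ satisfies $|\lambda|-\|A_{0}\|\le\sum_{k}\|A_{k}\|e^{-\Re(\lambda)\sigma_{k}\varepsilon^{-k}}\le\sum_{k}\|A_{k}\|$, whence $|\lambda|\le M:=\|A_{0}\|+\sum_{k}\|A_{k}\|$ uniformly in $\varepsilon$. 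Thus every eigenvalue with $\Re(\lambda)\ge0$ lies in the compact set $\{|\lambda|\le M\}$, which is what makes the limits below uniform.

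Instability (ii) is the direct direction. If $S_{0}^{+}\neq\emptyset$, pick $\mu\in S_{0}^{+}$; since $\delta\le r<\tfrac13\,\mathrm{dist}(S_{0},i\mathbb{R})$ the ball $\mathcal{B}_{\delta}(\mu)$ lies in $\{\Re>0\}$, and Theorem~\ref{thm:spec-approx}.\ref{thm:spec-approx-strong-unst} produces an eigenvalue there for small $\varepsilon$, so $\alpha(\varepsilon)>0$. If instead $\gamma_{l}^{(k)}(\omega_{0},\varphi^{0})>0$ is a finite positive value, then $\mu:=\gamma_{l}^{(k)}(\omega_{0},\varphi^{0})+i\omega_{0}\in S_{k}^{+}\subset\mathcal{A}_{k}$, and $\chi_{k}$ is nontrivial because this root is attained. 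Applying Theorem~\ref{thm:spec-approx}.\ref{thm:spec-approx-weak-unst} with $\delta<\Re(\mu)$ gives $\lambda\in\Sigma^{\varepsilon}$ with $\Re(\lambda)\varepsilon^{-k}=\Re(\Pi_{\varepsilon}^{(k)}(\lambda))>\Re(\mu)-\delta>0$, hence $\Re(\lambda)>0$ and $\alpha(\varepsilon)>0$. In both cases exponential instability follows.

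For stability (i) I would argue by contradiction: suppose there are $\varepsilon_{j}\downarrow0$ and $\lambda_{j}\in\Sigma^{\varepsilon_{j}}$ with $\Re(\lambda_{j})\ge0$, so $|\lambda_{j}|\le M$. Because $S_{0}^{+}=\emptyset$ we have $\mathcal{A}_{0}=\tilde{S}_{0}^{-}$ and $\mathcal{B}_{r}(\mathcal{A}_{0})\subset\{\Re<-2r\}$, so $\lambda_{j}$ is not strong, i.e. $\lambda_{j}\in\Sigma_{c}^{\varepsilon_{j}}$. Theorem~\ref{thm:spec-approx}.\ref{thm:spec-approx-stable}, applied with $R=M+1$, then yields an index $k_{j}$ and $\mu_{j}\in\mathcal{A}_{k_{j}}$ with $|\Pi_{\varepsilon_{j}}^{(k_{j})}(\lambda_{j})-\mu_{j}|<\delta$; taking real parts, $\Re(\mu_{j})>\Re(\lambda_{j})\varepsilon_{j}^{-k_{j}}-\delta\ge-\delta$ while $|\Im(\mu_{j})|\le M+\delta$. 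Since all spectral manifolds satisfy $\gamma_{l}^{(k)}<0$ we have $S_{k_{j}}^{+}=\emptyset$, so necessarily $\mu_{j}\in\tilde{S}_{k_{j}}^{-}$ and $\Re(\mu_{j})<0$.

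The crux, and the step I expect to be the main obstacle, is turning this into a contradiction, i.e. excluding that $\mu_{j}$ approaches the imaginary axis from the left. Two mechanisms close the gap. First, restricting to $|\Im|\le M+1$ makes the parameter domain $\{|\omega|\le M+1\}\times\mathbb{S}^{k-1}$ compact; by continuity of the manifolds (Theorem~\ref{thm:spec-mnf}) the finite values of $\gamma_{l}^{(k)}$ attain a negative maximum $-c_{k}<0$ there, values tending to $-\infty$ only helping, and the analogous manifolds built from $\tilde{\chi}_{k}$ give the corresponding bound for $\tilde{S}_{k}^{-}$; this yields a uniform gap $\Re(\mu_{j})\le-c<0$, and choosing $\delta<c$ contradicts $\Re(\mu_{j})>-\delta$. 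Second, and more robustly, eigenvalues shadowing $\tilde{S}_{k}^{-}$ are precisely those captured by the degeneracy result Theorem~\ref{thm:degeneracy-spec}, which places them strictly in $\{\Re<0\}$, again contradicting $\Re(\lambda_{j})\ge0$. Hence for small $\varepsilon$ no eigenvalue has $\Re\ge0$; since only finitely many eigenvalues satisfy $\Re\ge-1$, their maximal real part is a negative number $\alpha(\varepsilon)<0$, giving exponential stability. The delicate point throughout is the uniformity in $\varepsilon$ of these gaps, for which the $\varepsilon$-independent a~priori bound $|\lambda|\le M$ is indispensable.
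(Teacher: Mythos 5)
The paper gives no written proof of this corollary (it is declared an immediate consequence of Theorems \ref{thm:degeneracy-spec}, \ref{thm:spec-approx} and \ref{thm:spec-mnf}), and your argument is the natural elaboration of that claim: reduction to the sign of the spectral abscissa, the $\varepsilon$-uniform a priori bound $|\lambda|\le M$ for eigenvalues with $\Re(\lambda)\ge 0$ obtained from (\ref{eq:pert-0}), instability via Theorem \ref{thm:spec-approx}(i) and (iii), and stability by contradiction through Theorem \ref{thm:spec-approx}(iv). Part (ii) and the treatment of $S_n$ and of the $S_k^{+}$ components in part (i) are correct and complete, and your identification of the a priori bound as the source of uniformity is exactly right.

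The soft spot is the one you flagged, and neither of your two mechanisms fully closes it as written. The hypothesis of part (i) constrains only the manifolds $\gamma_l^{(k)}$ of $S_k$; it says nothing about the manifolds of the projected functions $\tilde{\chi}_k$, so $\tilde{S}_k^{-}=\tilde{S}_k\cap\{\Re<0\}$ (and likewise $\mathcal{A}_0=\tilde{S}_0^{-}$) is in the left half-plane by fiat, not by hypothesis, and may a priori accumulate on the imaginary axis. Hence your compactness argument does not deliver the uniform gap $-c_k$ for those components, and Theorem \ref{thm:degeneracy-spec} is only a local counting statement, so it does not by itself exclude stray eigenvalues of $\Sigma^{\varepsilon}$ near $\tilde{S}_k^{-}$ but outside the chosen neighborhoods. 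The clean repair avoids the uniform gap altogether: for a sequence $\lambda_j$ with $\Re(\lambda_j)\ge 0$ one has $\Re\bigl(\Pi_{\varepsilon_j}^{(k)}(\lambda_j)\bigr)\ge 0$ for every $k$, so in the dichotomy of Lemma \ref{lem:2} any finite accumulation point of the rescaled eigenvalues has nonnegative real part and therefore cannot lie in any $\tilde{S}_k^{-}$; it must be a value $\gamma_l^{(\tilde{k})}(\omega_0,\varphi_1,\dots,\varphi_{\tilde{k}-1})\ge 0$ of an ordinary spectral manifold (or an eigenvalue of $A_0$ with $\Re\ge 0$, or the escape case (c), which terminates at level $n$ where $S_n\subset\{\Re<0\}$ forbids it). Each alternative contradicts the hypothesis $\gamma_l^{(k)}<0$, $S_0^{+}=\emptyset$ directly. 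With that substitution your proof is complete; the rest of the structure can stand as is.
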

In particular, it is evident that the onset of instability is mediated
by the crossing of an $n$-dimensional spectral manifold corresponding
to the timescale $\tau_{n}$. In order to see this, observe that $i\omega\in\Sigma^{\varepsilon}$
implies that $\chi_{n}\left(\omega,\varphi_{1},\dots,\varphi_{n-1};e^{i\varphi_{n}}\right)=0$,
where $\varphi_{k}=\sigma_{k}\omega/\varepsilon^{-k}$ for all $1\leq k\leq n$.
It is very easy to assess whether $S_{0}^{+}\ne\emptyset$, as we
only have to compute the positive eigenvalue of $A_{0}$. 

As we see from Theorems \ref{thm:spec-approx} and \ref{thm:spec-mnf},
the pseudo-continuous part of the spectrum can be understood geometrically
as a certain projection of the manifolds $\gamma_{l}^{(k)}$ to
the complex plane. The resulting projections are called here $S_{k}$.
Certain parts of these projections, called here $\mathcal{A}_{k}$
are asymptotically filled with the eigenvalues $\Sigma_{k,\delta}^{\varepsilon}$.
In the case when $\mathcal{A}_{k}$ is one-dimensional ($k=1$), this
is a projection of curves, and as a result, the asymptotic spectrum
has the form of curves - such a case was considered in details
in \cite{Lichtner2011}. We remark here, that already Bellman and Cooke
\cite[p. 399]{Bellman1963} noticed: \textit{"They [the eigenvalues of Eq.~(\ref{eq:DDE})] are thus seen to lie in a finite
number of chains [here $\Sigma_{k,\delta}^{\varepsilon}$,
$k=1,\dots,n$]. Each chain consists of a countable infinity
of zeros."}

For larger $k$, the spectrum is described by the projection of some
higher-dimen- sional manifold, and as $\varepsilon\to0$, the corresponding
sets $\Sigma_{k,\delta}^{\varepsilon}$ become densely filled with
eigenvalues. This geometric property of the spectrum provides a
motivation to refer to the sets $\mathcal{A}_{k}$ as asymptotic continuous.

\section{Example: scalar equation with two large hierarchical delays\label{sec:example}}

In order to illustrate the obtained results, we treat the scalar linear
DDE with two large hierarchical delays in more detail, and study the
set of solutions (eigenvalues) to the corresponding characteristic
equation
\begin{equation}
-\lambda+a+be^{-\lambda/\varepsilon}+ce^{-\lambda/\varepsilon^{2}}=0.\label{eq:ex-cheq}
\end{equation}
Theorem$\,\ref{thm:spec-approx}$ states that the solutions of (\ref{eq:ex-cheq})
can each be approximated by an element of one of the sets $S_{0}^{+}$
(asymptotic strong unstable spectrum), and $S_{1}^{+},$ $S_{2}$
(asymptotic continuous spectra). Note that in the scalar case Condition
(ND) reduces to $a,b,c\neq0$ such that there is no degenerate
spectrum. As an immediate consequence of the presented theory, for
large values of the delay, the stability boundary of the trivial equilibrium
is solely determined by the position of $S_{0}^{+},$ $S_{1}^{+},$
and $S_{2}$ with respect to the imaginary axis. We distinguish between
three different types of instability, each corresponding to one of
the sets $S_{0}^{+},S_{1}^{+}$ and $S_{2}$. If $S_{0}^{+}$ is not
empty, we say that the spectrum is strongly unstable and mean that
there are solutions of the original DDE, which grow on timescale $1$.
If $S_{1}^{+}$ or $S_{2}^{+}$ is not empty, we speak of a weak instability,
and mean that solutions grow on time-scale $\varepsilon^{-1}$ or
$\varepsilon^{-2}$, respectively. This scale separation cannot be
observed in linear delay equation with a single large delay \cite{Lichtner2011}.
In order to differentiate these two types, we also refer to them as
weak instability on timescale $\varepsilon^{-1}$ or $\varepsilon^{-2}$,
respectively. Let us focus on weak instabilities, and assume that
the strong unstable spectrum is absent, i.e. $S^+_0=\emptyset$. We compare the approximations
$S_{1}^{+}$ and $S_{2}$ to numerically computed eigenvalues. The
explicit formulas as well as necessary and sufficient conditions for
stability are contained in Sec.~\ref{subsec:Explicit-formulas};
the results are summarized in Table \ref{tab:Conditions-for-stability}.
\begin{table}
\begin{centering}
\begin{tabular*}{\textwidth}{l@{\extracolsep{\fill}}llll}
\multicolumn{3}{c}{relevant asymptotic spectra} & parameters\tabularnewline
\midrule 
\multirow{2}{*}{$\begin{array}{c}
\text{asymptotic strong}\\
\text{unstable spectrum}
\end{array}$} & \multirow{2}{*}{$S_{0}^{+}$ } & present (unstable) & $\Re(a)>0$\tabularnewline
\cmidrule{3-4} 
 &  & not present  & $\Re(a)<0$\tabularnewline
\midrule
\multirow{6}{*}{$\begin{array}{c}
\text{asymptotic continuous}\\
\text{spectrum}
\end{array}$ } & \multirow{3}{*}{$S_{1}^{+}$} & present (unstable) & $\left|b\right|>\left|\Re(a)\right|$\tabularnewline
\cmidrule{3-4} 
 &  & not present & $\left|b\right|<\left|\Re(a)\right|$\tabularnewline
\cmidrule{3-4} 
 &  & singular points & $\Re(a)=0$\tabularnewline
\cmidrule{2-4} 
 & \multirow{3}{*}{$S_{2}$} & unstable & $|c|>\left|\Re(a)\right|-|b|$\tabularnewline
\cmidrule{3-4} 
 &  & stable & $|c|<\left|\Re(a)\right|-|b|$\tabularnewline
\cmidrule{3-4} 
 &  & singular points & $\left|b\right|\geq\left|\Re(a)\right|$\tabularnewline
\midrule
\end{tabular*}\linebreak{}
\par\end{centering}
\caption{\label{tab:Conditions-for-stability}Summary of spectra and conditions
for stability of Eq.~(\ref{eq:ex-cheq}).}
\end{table}
We discuss the destabilization scenario as eigenvalues of the pseudo-continous
spectrum cross the imaginary axis. Let us fix parameters corresponding
to an exponentially stable equilibrium, i.e. $\Re(a)<0$ (no strong unstable
spectrum), $|b|<\left|\Re(a)\right|$ ($S_{1}^{+}=\emptyset$), and $|c|<\left|\Re(a)\right|-|b|$ ($S_{2}^{+}=\emptyset$). Note
that these conditions are not independent of one another: $|b|>\left|\Re(a)\right|$
($S_{1}^{+}\neq\emptyset$) implies $|c|>0>\left|\Re(a)\right|-|b|$ ($S_{2}\neq\emptyset$). 

Following Table~\ref{tab:Conditions-for-stability}, $S_{2}$ crosses
the imaginary axis if $|c|$ is increased beyond the threshold value
$\left|\Re(a)\right|-|b|,$ see Fig. \ref{fig:spec-1}. Perturbations
in the neighborhood of the equilibrium grow on the timescale $\varepsilon^{-2}$
and the equilibrium loses stability. Leaving $c$ unchanged, we vary
$b$ such that $|b|>\left|\Re(a)\right|$ and $S_{2}$ develops a
singularity, see Fig. \ref{fig:spec-2}. Simultaneously, $S_{1}$
crosses the imaginary axis and $S_{1}^{+}\neq\emptyset$, see Fig.~\ref{fig:spec-2}.
Here, perturbations in the neighborhood of the equilibrium grow on
the timescale $\tau_{1}=\varepsilon^{-1}$ and the equilibrium has
become qualitatively more unstable.
\begin{figure}
\begin{centering}
\begin{minipage}[t]{0.32\linewidth}%
\begin{center}
\includegraphics[width=1\linewidth]{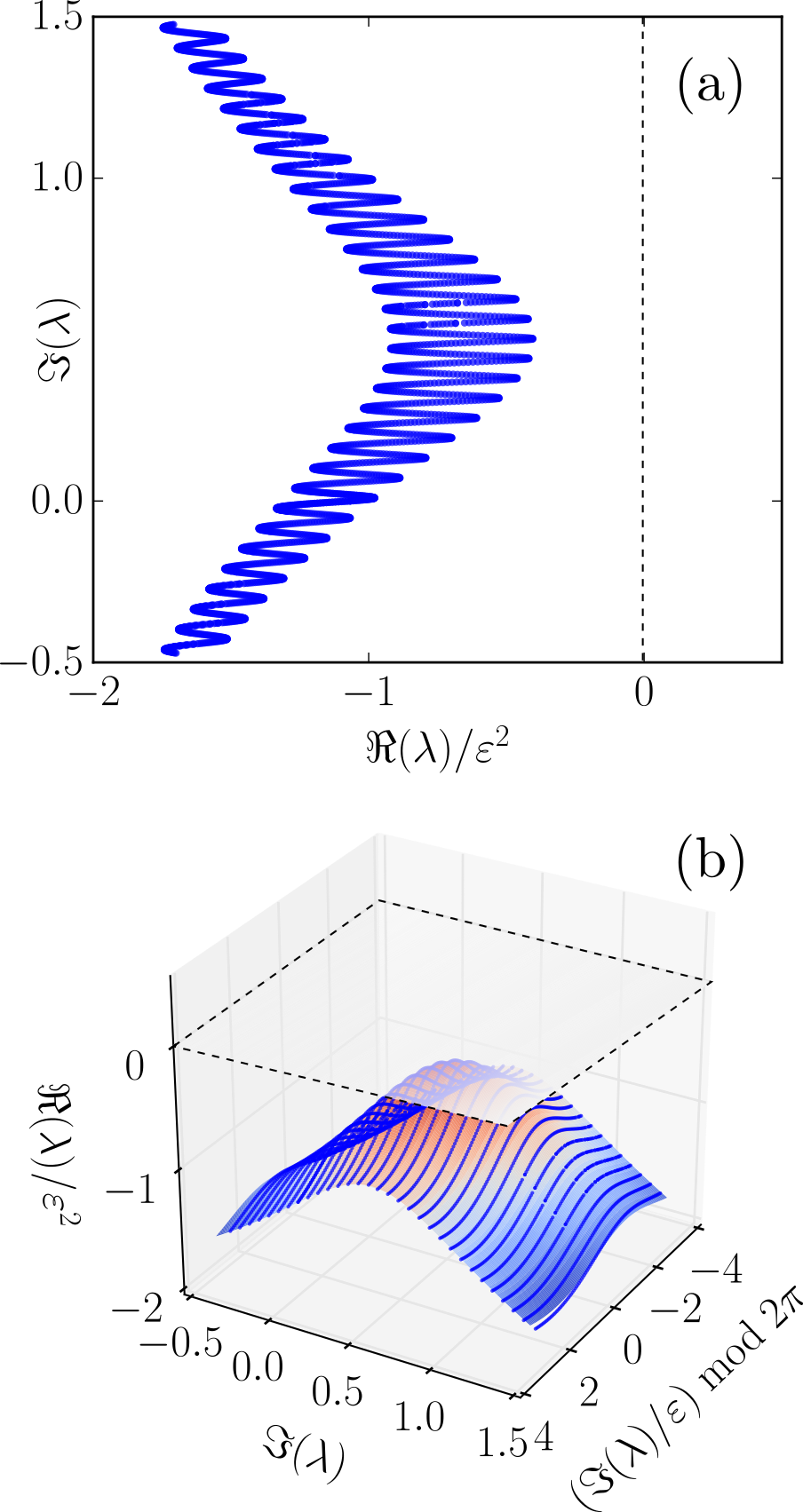}
\par\end{center}%
\end{minipage}\hfill{}%
\begin{minipage}[t]{0.32\linewidth}%
\begin{center}
\includegraphics[width=1\linewidth]{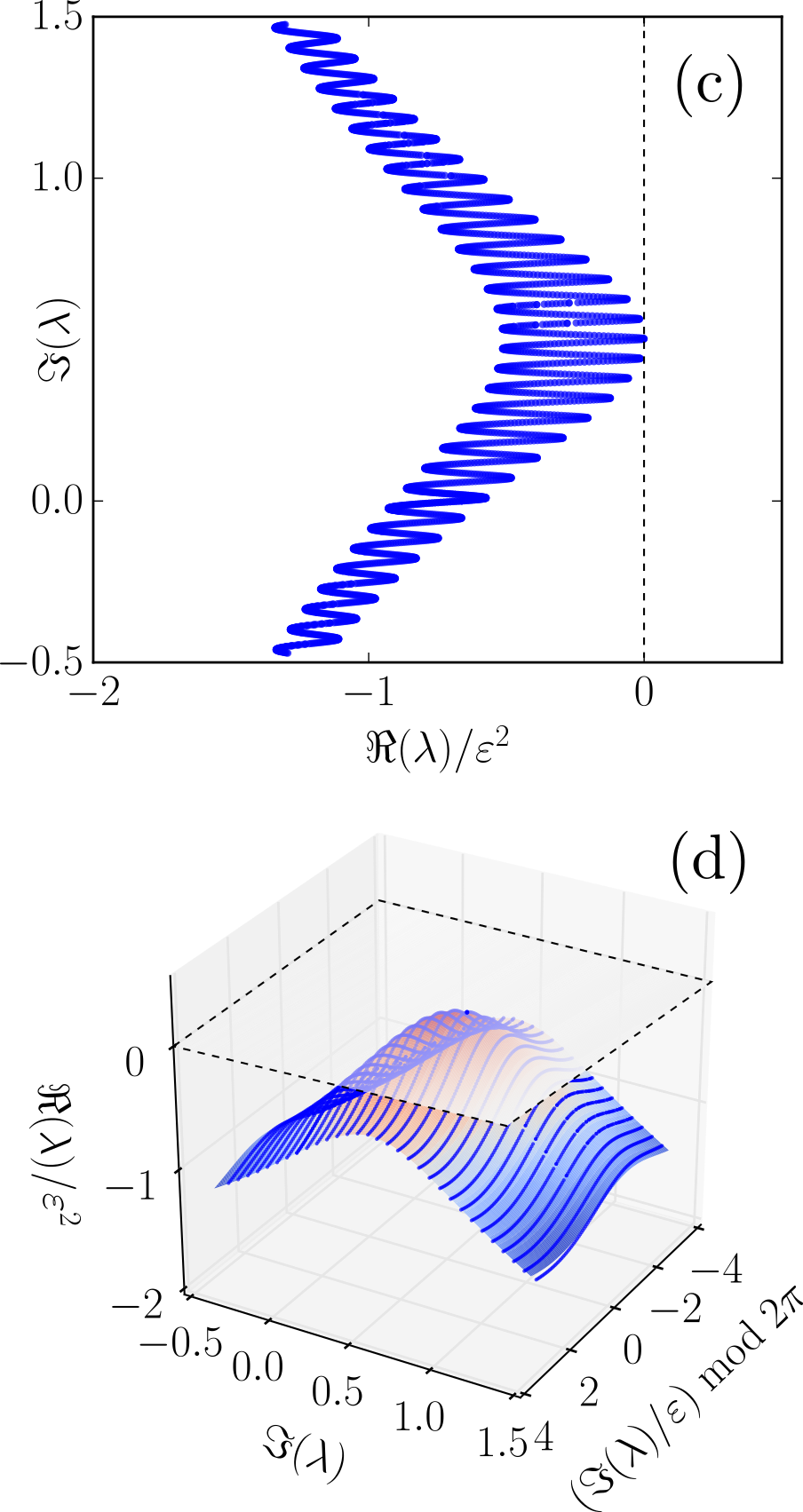}
\par\end{center}%
\end{minipage}\hfill{}%
\begin{minipage}[t]{0.32\linewidth}%
\begin{center}
\includegraphics[width=1\linewidth]{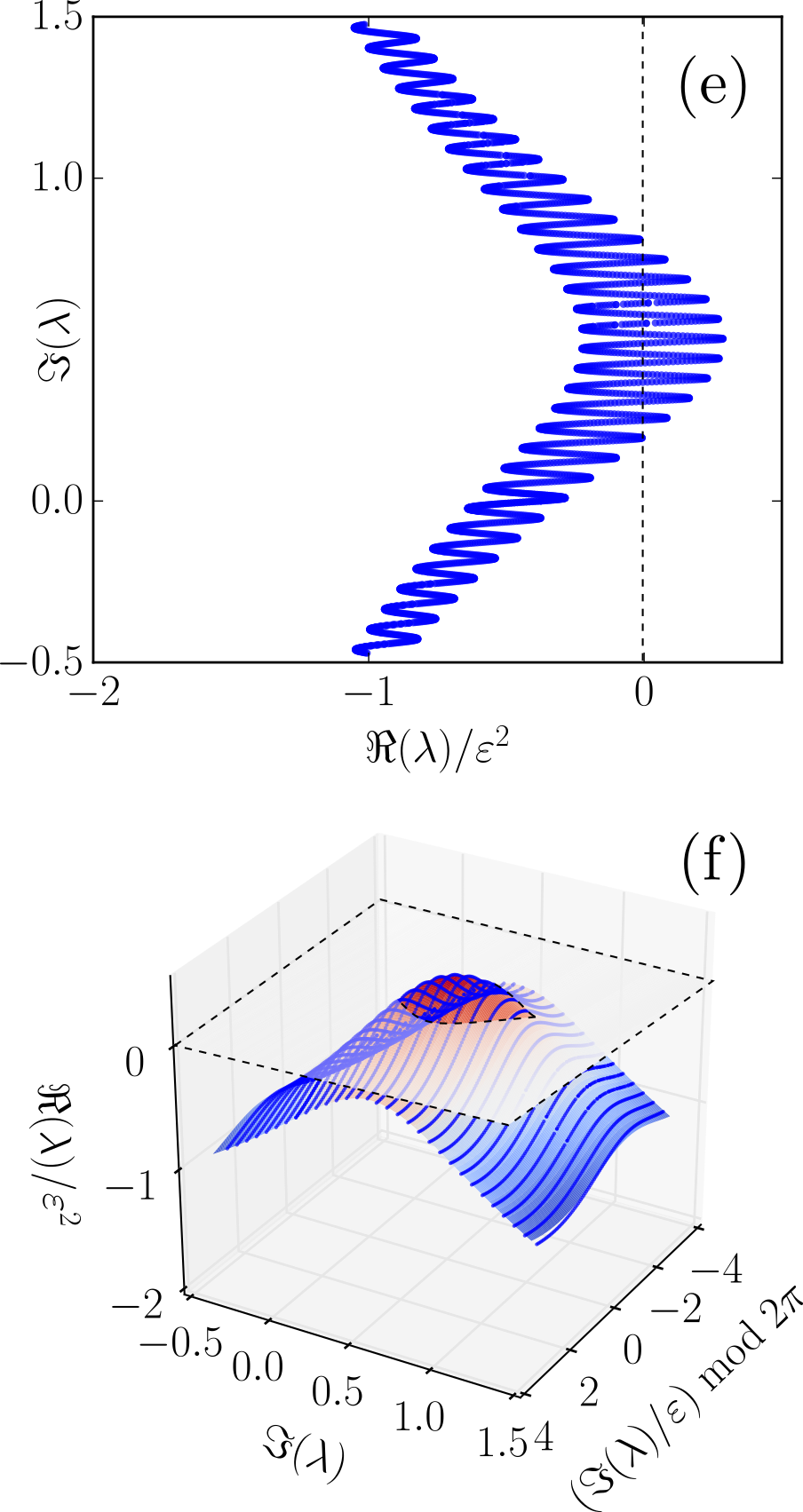}
\par\end{center}%
\end{minipage}
\par\end{centering}
\caption{\label{fig:spec-1}Eigenvalues of the characteristic equation (\ref{eq:ex-cheq})
corresponding to two hierarchical delays. Panels (a)-(f) show the
destabilization of the spectrum varying parameter $c$ (columns from
left to right: $c=0.2$ (stable), $c=0.3$ (neutral), $c=0.4$ (unstable)).
Panels (a),(c),(e) show the spectrum (real part rescaled). Panels
(b),(d),(f): approximation of the spectrum via the two-dimensional
spectral manifold $\gamma^{(2)}$ ($S_{2},$ colored surface). Other
parameters are $a=-0.4+0.5i$, $b=0.1$, and $\varepsilon=0.01$.
$S_{0}^{+}$ and $S_{1}^{+}$ are not present. Blue dots are numerically
computed eigenvalues.}
\end{figure}
In Fig.~\ref{fig:spec-2}, one observes the hierarchical splitting
of the spectrum in terms of the sets $S_{1}^{+}$ and $S_{2}$. This
phenomenon is not observed in systems with single large delay \cite{Lichtner2011}.

We remark that the above mentioned destabilization governed by the
characteristic equation (\ref{eq:ex-cheq}) was observed numerically in \cite{Yanchuk2014a,Yanchuk2015}.
It was shown that such an instability, accompanied by an appropriate
nonlinear saturation, can lead to a formation of spiral-wave like
dynamics. Below, we derive $S_0^+,S_1^+$ and $S_2$ explicitely.

\begin{figure}
\begin{centering}
\begin{minipage}[t]{0.32\linewidth}%
\begin{center}
\includegraphics[width=1\linewidth]{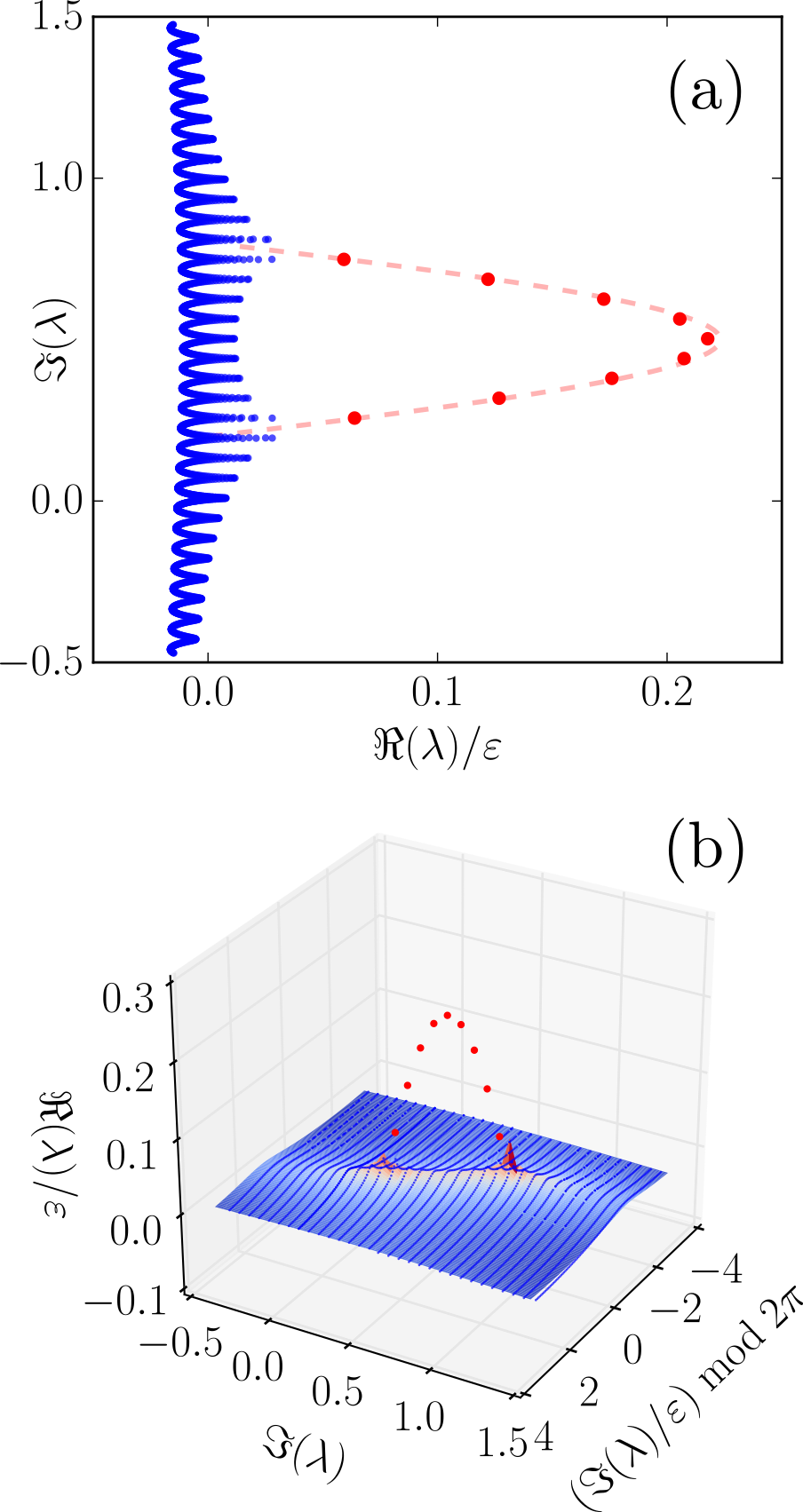}
\par\end{center}%
\end{minipage}\hfill{}%
\begin{minipage}[t]{0.32\linewidth}%
\begin{center}
\includegraphics[width=1\linewidth]{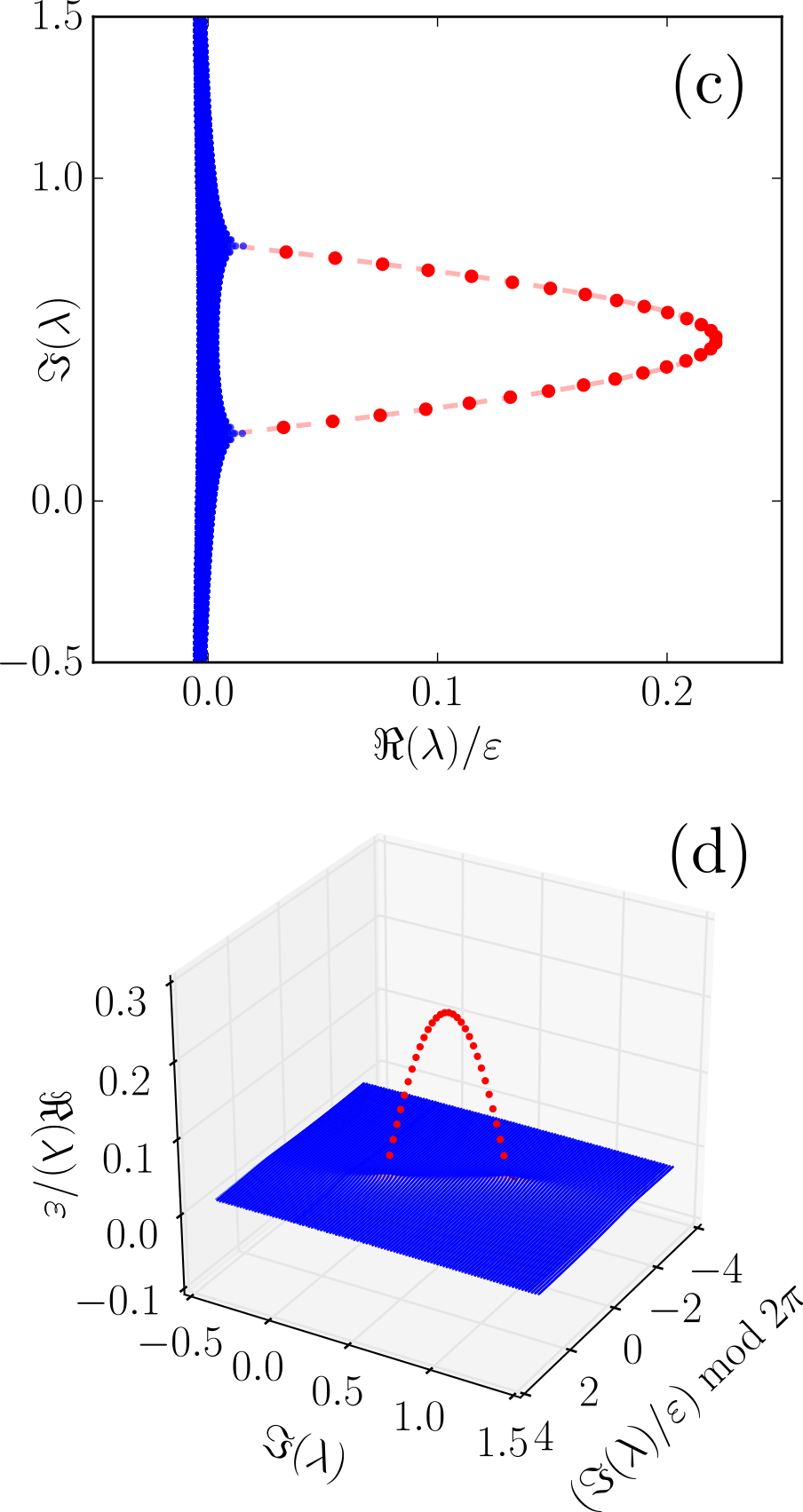}
\par\end{center}%
\end{minipage}\hfill{}%
\begin{minipage}[t]{0.32\linewidth}%
\begin{center}
\includegraphics[width=1\linewidth]{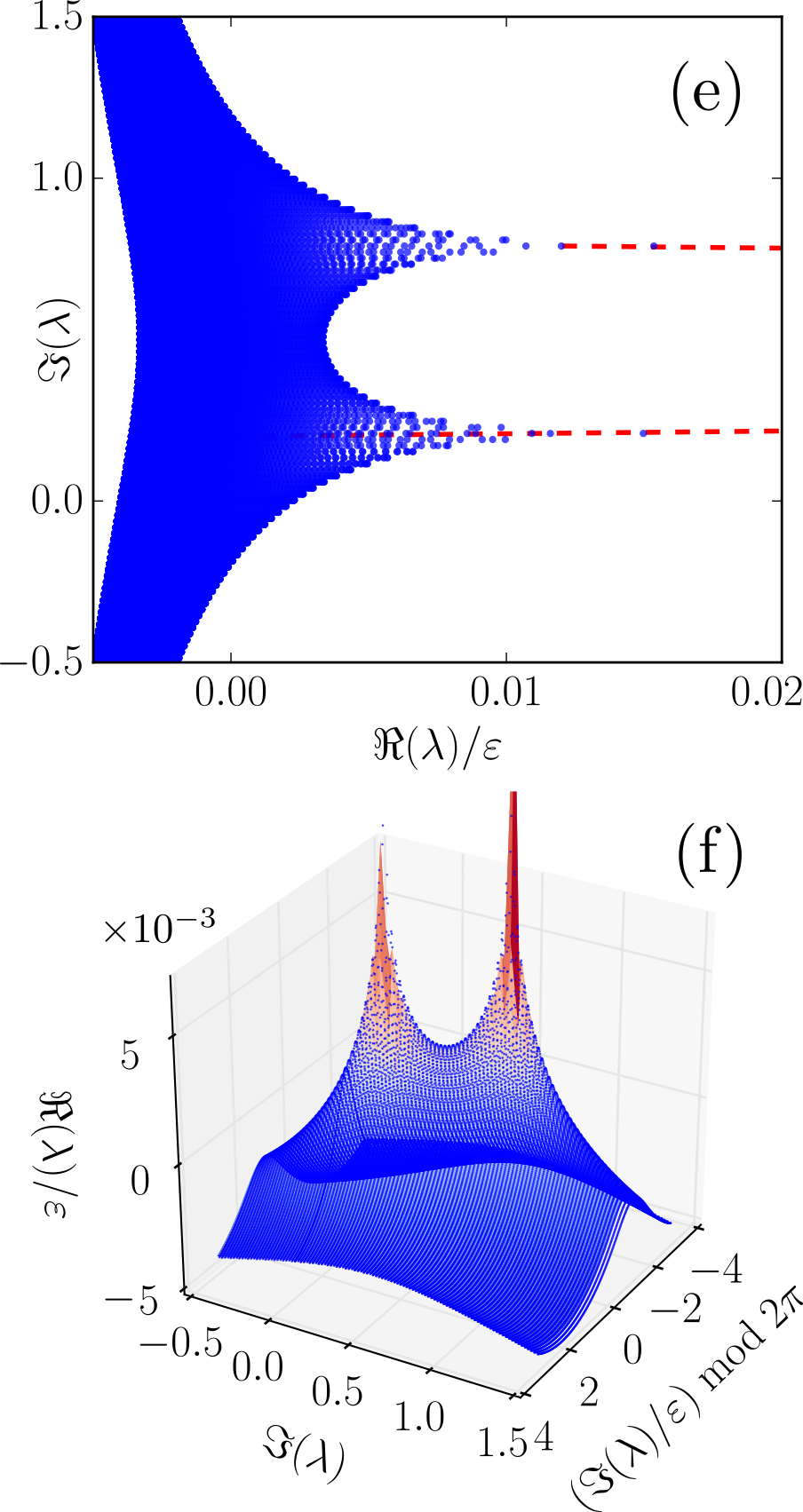}
\par\end{center}%
\end{minipage}
\par\end{centering}
\caption{\label{fig:spec-2}Eigenvalues of the characteristic equation (\ref{eq:ex-cheq})
corresponding to two hierarchical delays. Two types of spectra coexisting:
$S_{1}^{+}$ (red) and $S_{2}$ (blue). Panels (a)-(f) show the spectrum
varying parameter $\varepsilon$ (columns from left to right: $\varepsilon=0.01$,
$\varepsilon=0.003$, $\varepsilon=0.003$ (zoom)). Panels (a),(c),(e):
approximation of the $\tau_{1}-$spectrum (red) via spectral manifold
$\gamma^{(1)}$ (magenta dotted). Panels (b),(d),(f): approximation
of the $\tau_{2}-$spectrum (blue) via two-dimensional spectral manifolds
$\gamma^{(2)}$ (colored surface). Other parameters are $a=-0.4+0.5i$,
$b=0.5$, and $c=0.3$. $S_{0}^{+}$ is not present. Blue dots are
numerically computed eigenvalues.}
\end{figure}

\subsection*{Explicit formulas of asymptotic spectral manifolds\label{subsec:Explicit-formulas}}

The asymptotic strong unstable spectrum can be read off directly from
Eq.~(\ref{eq:ex-cheq}), 
\begin{equation}
S_{0}^{+}=\begin{cases}
\{a\}, & \Re(a)>0,\\
\emptyset, & \text{otherwise.}
\end{cases}\label{eq:ex-asympt-strong}
\end{equation}
As Eq.~(\ref{eq:ex-cheq}) is scalar, $\text{rank}\,b=\text{rank}\,c=1$,
the asymptotic continuous spectrum is determined by two spectral manifolds
$\gamma^{(1)},\gamma^{(2)}$ such that $S_{1}^{+}=\{\gamma^{(1)}(\omega)+i\omega,\,\omega\in\mathbb{R}\}\cap\{\lambda\in\mathbb{C}|\Re(\lambda)>0\},$
and $S_{2}=\{\gamma^{(2)}(\omega,\varphi_{1})+i\omega,\,\omega,\varphi_{1}\in\mathbb{R}\}$
respectively. These manifolds $\gamma^{(1)},\gamma^{(2)}$ can be
computed from
\[
\chi_{1}\left(\omega;Y\right)=\lambda-a-bY=0,\qquad|Y|=e^{-\gamma^{(1)}},
\]
and
\[
\chi_{2}\left(\omega,\varphi_{1};Z\right)=\lambda-a-be^{-i\varphi_{1}}-cZ=0,\qquad|Z|=e^{-\gamma^{(2)}},
\]
see Sec.~\ref{subsec:Spectral-manifolds} for details. We proceed
with the formal analysis of these manifolds. 

It follows from straightforward computation that
\begin{equation}
\gamma^{(1)}(\omega)=-\frac{1}{2}\ln\frac{(\omega-\Im(a))^{2}+\Re(a)^{2}}{|b|^{2}}.\label{eq:ex-gamma-1}
\end{equation}
$\gamma^{(1)}$ attains its global maximum at $\omega=\Im(a)$, and
$\gamma_{\max}^{(1)}=\gamma^{(1)}(\Im(a))=\ln\frac{|b|}{\left|\Re(a)\right|}$.
As a consequence, $\gamma_{\max}^{(1)}>0$ if and only if $|b|>|\Re(a)|$.
The unstable part of $S_{1}$ is then given by 
\[
S_{1}^{+}=\left\{ \gamma^{(1)}(\omega)+i\omega\,,\,\omega_{1}<\omega<\omega_{2}\right\} ,
\]
where $\omega_{1,2}=\Im(a)\pm\sqrt{|b|^{2}-\Re(a)^{2}}$ are the zero
points of $\gamma^{(1)}$. If the asymptotic strong spectrum
is neutral ($\Re(a)=0$), then $\gamma_{\max}^{(1)}$ is singular
$\gamma^{(1)}(\Im(a))=\infty.$

Similarly, $\gamma^{(2)}$ can be expressed as
\begin{eqnarray}
\gamma^{(2)}(\omega,\varphi_{1}) & = & -\frac{1}{2}\ln\frac{1}{|c|^{2}}\left[\left(\Re\left(a\right)+\left|b\right|\cos(\varphi_{1}-\mbox{Arg}(b))\right)^{2}\right.\label{eq:ex-gamma-2}\\
 &  & \left.+\left(\omega-\Im(a)+\left|b\right|\sin(\varphi_{1}-\mbox{Arg}(b))\right)^{2}\right].\nonumber 
\end{eqnarray}
Let $\varphi_{1}$ be fixed and assume $\left|b\right|<\left|\Re\left(a\right)\right|$
($S_1^+=\emptyset$), then $\gamma^{(2)}(\omega,\varphi_{1})$
defined in (\ref{eq:ex-gamma-2}) attains its global maximum 
\[
\gamma^{(2)}(\omega_{\max}(\varphi_{1}),\varphi_{1})=-\ln\frac{\left|\Re\left(a\right)+\left|b\right|\cos(\varphi_{1}-\mbox{Arg}(b))\right|}{|c|}
\]
 at $\omega=\omega_{\max}(\varphi_{1})=\Im(a)-\left|b\right|\sin(\varphi_{1}-\mbox{Arg}(b))$
and the maximum is given by $$\max_{\varphi_{1}}\gamma^{(2)}(\omega_{\max}(\varphi_{1}),\varphi_{1})=-\ln\frac{\left|\Re\left(a\right)-\left|b\right|\right|}{|c|}.$$
If $\left|b\right|\geq\left|\Re\left(a\right)\right|$, $\gamma^{(2)}$
is unbounded and the zeros of $\gamma^{(1)}$ (not necessarily isolated)
correspond to the singularities of $\gamma^{(2)}$. The corresponding
values of $\varphi_{1}$,
\[
\varphi_{\pm}=-\mbox{Arg}(b)\pm\arctan\left(\frac{\sqrt{|b|^{2}-\Re(a)^{2}}}{\Re\left(a\right)}\right).
\]
can be found from the ansatz $\chi_{2}(\omega_{1,2},\varphi_{\pm};0)=0$. In summary,
\[
\sup_{\omega,\varphi_{1}\in\mathbb{R}}\gamma^{(2)}(\omega,\varphi_{1})=\begin{cases}
-\ln\frac{\left|\Re\left(a\right)\right|-\left|b\right|}{|c|}, & \mbox{if}\,\left|\Re\left(a\right)\right|>\left|b\right|,\\
\infty & \mbox{if}\,\left|\Re\left(a\right)\right|<\left|b\right|.
\end{cases}
\]

\section{Proof of Theorems \ref{thm:degeneracy-spec}, \ref{thm:spec-approx} and \ref{thm:spec-mnf}}\label{sec:proofs}

In this section, we prove our main results stated in Sec.~\ref{sec:overview}.

\subsection*{Proof of Theorem \ref{thm:degeneracy-spec}}

We proof by induction starting from the highest order $k=n$.  
We assume that $\det A_n=0$, $A_n\neq 0$ and consider a $\mu_\varepsilon$ such that $\tilde \chi^\varepsilon _{n-1} (\mu_\varepsilon) = 0$, i.e. $\mu_\varepsilon\in\tilde{\Sigma}_{n-1}^{\varepsilon}$ as specified in Definition~\ref{def:deg-spec}. 
We show that for a sufficiently small $\varepsilon$ and neighborhood $U_\varepsilon(\mu_\varepsilon)$ the number of zeros $\mu_\varepsilon$ of $\tilde \chi^\varepsilon _{n-1}$ counting multiplicity equals the number of eigenvalues $\lambda_\varepsilon\in \Sigma^\varepsilon \cap U_\varepsilon(\mu_\varepsilon)$. 
Again let the matrices $U_{n}=[U_{n,1},U_{n,2}]$ and $V_{n}=[V_{n,1},V_{n,2}]$ contain the left and right singular vectors corresponding to the cokernel ($U_{n,1}$ and $V_{n,1}$)
and image ($U_{n,2}$ and $V_{n,2}$) of $A_{n}$, see Def.~\ref{def:deg-spec} for details. Consider $z\in\mathbb{C}$, $|z|$ sufficiently small and define
\[
f_{\varepsilon}(z):=\chi^{\varepsilon}(z+\mu_\varepsilon)=\det\left(\begin{array}{cc}
C_{1}^{\varepsilon}(z) & C_{2}^{\varepsilon}(z)\\
C_{3}^{\varepsilon}(z) & C_{4}^{\varepsilon}(z)
\end{array}\right),
\]
 where
\begin{align*}
C_{1}^{\varepsilon}(z) & =-(z+\mu_\varepsilon)J_{1}^{(n)}+A_{0,1}^{(n)}+\sum\limits _{k=1}^{n-1}A_{k,1}^{(n)}e^{-\left(z+\mu_\varepsilon\right)\sigma_{k}\varepsilon^{-k}},\\
C_{2}^{\varepsilon}(z) & =-(z+\mu_\varepsilon)J_{2}^{(n)}+A_{0,2}^{(n)}+\sum\limits _{k=1}^{n-1}A_{k,2}^{(n)}e^{-\left(z+\mu_\varepsilon\right)\sigma_{k}\varepsilon^{-k}},\\
C_{3}^{\varepsilon}(z) & =-(z+\mu_\varepsilon)J_{3}^{(n)}+A_{0,3}^{(n)}+\sum\limits _{k=1}^{n-1}A_{k,3}^{(n)}e^{-\left(z+\mu_\varepsilon\right)\sigma_{k}\varepsilon^{-k}},\\
C_{4}^{\varepsilon}(z) & =-(z+\mu_\varepsilon)J_{4}^{(n)}+A_{0,4}^{(n)}+\sum\limits _{k=1}^{n-1}A_{k,4}^{(n)}e^{-\left(z+\mu_\varepsilon\right)\sigma_{k}\varepsilon^{-k}}+A_{n,4}^{(n)}e^{-\left(z+\mu_\varepsilon\right)\sigma_{n}\varepsilon^{-n}},
\end{align*}
is the block structure obtained from multiplying $\chi^{\varepsilon}(\lambda)$
by $\det U_{n}^{\ast}$ and $\det V_{n}$ from left and right with the corresponding
projected matrices
\begin{eqnarray*}
A_{k,1}^{(n)} & = & U_{n,1}^{\ast}A_{k}V_{n,1},\,J_{1}^{(n)}=U_{n,1}^{\ast}V_{n,1}\in\mathbb{C}^{(d-d_{n})\times(d-d_{n})},\\
A_{k,2}^{(n)} & = & U_{n,1}^{\ast}A_{k}V_{n,2},\,J_{2}^{(n)}=U_{n,1}^{\ast}V_{n,2}\in\mathbb{C}^{(d-d_{n})\times d_{n}},\\
A_{k,3}^{(n)} & = & U_{n,2}^{\ast}A_{k}V_{n,1},\,J_{3}^{(n)}=U_{n,2}^{\ast}V_{n,1}\in\mathbb{C}^{d_{n}\times(d-d_{n})},\\
A_{k,4}^{(n)} & = & U_{n,2}^{\ast}A_{k}V_{n,2},\,J_{4}^{(n)}=U_{n,2}^{\ast}V_{n,2}\in\mathbb{C}^{d_{n}\times d_{n}},
\end{eqnarray*}
for all $0\leq k\leq n-1.$ Using the Schur complement formula, we
obtain 
\[
f_{\varepsilon}(z)=\left(e^{-\left(z+\mu_\varepsilon\right)\sigma_{n}\varepsilon^{-n}}\right)^{d_{n}}\det\left(\tilde{C}_{1}^{\varepsilon}(z)\right)\det\left(\tilde{C}_{4}^{\varepsilon}(z)\right),
\]
where the matrices $\tilde{C}_{1}^{\varepsilon}(z)$ and ${C}_{4}^{\varepsilon}(z)$ are given by
\begin{align*}
\tilde{C}_{1}^{\varepsilon}(z)= & -(z+\mu_\varepsilon)J_{1}^{(n)}+A_{0,1}^{(n)}+\sum\limits _{k=1}^{n-1}A_{k,1}^{(n)}e^{-\left(z+\mu_\varepsilon\right)\sigma_{k}\varepsilon^{-k}}\\
 & -e^{\left(z+\mu_\varepsilon\right)\sigma_{n}\varepsilon^{-n}}C_{2}^{\varepsilon}(z)\left(\tilde{C}_{4}^{\varepsilon}(z)\right)^{-1}C_{3}^{\varepsilon}(z),\\
\tilde{C}_{4}^{\varepsilon}(z)= & A_{n,4}^{(n)}+e^{\left(z+\mu_\varepsilon\right)\sigma_{n}\varepsilon^{-n}}\left[-(z+\lambda)J_{4}^{(n)}+A_{0,4}^{(n)}+\sum\limits _{k=1}^{n-1}A_{k,4}^{(n)}e^{-\left(z+\mu_\varepsilon\right)\sigma_{k}\varepsilon^{-k}}\right].
\end{align*}
Note that
\[
\det\tilde{C}_{4}^{\varepsilon}(z)=\det A_{n,4}^{(n)}+\mathcal{O}\left(\left|e^{\left(z+\mu_\varepsilon\right)\varepsilon^{-1}}\right|\right),
\]
Choose $U^\varepsilon(\mu_\varepsilon)$ such that $\tilde \chi_{n-1}^{\varepsilon}(z+\mu_\varepsilon)\neq0$ and $\Re(z+\mu_\varepsilon)<0$ for all $z$ such that $z+\mu_\varepsilon \in U^\varepsilon(\mu_\varepsilon).$  
As a result, we have
 $$\left(\tilde{C}_{4}^{\varepsilon}(z)\right)^{-1}=\left(A_{n,4}^{(n)}\right)^{-1}+\mathcal{O}\left(\left|e^{\left(z+\mu_\varepsilon\right)\varepsilon^{-1}}\right|\right),$$
\[
\det\tilde{C}_{1}^{\varepsilon}(z)=\tilde \chi_{n-1}^{\varepsilon}(z+\mu_\varepsilon)+\mathcal{O}\left(\left|e^{\left(z+\mu_\varepsilon\right)\varepsilon^{-n}}\right|\right),
\]
and
\[
f_{\varepsilon}(z)\left(e^{\left(z+\mu_\varepsilon\right)\varepsilon^{-n}}\right)^{d_{n}}=\det(A_{k,4}^{(n)})\tilde{\chi}_{n-1}^{\varepsilon}(z+\mu_\varepsilon)+\mathcal{O}\left(\left|e^{\left(z+\mu_\varepsilon\right)\varepsilon^{-1}}\right|\right).
\]
where $\tilde \chi_{n-1}^{\varepsilon}(z+\mu_\varepsilon)$ is as in Definition~\ref{def:deg-spec}(i), and by assumption $\tilde \chi_{n-1}^{\varepsilon}(\mu_\varepsilon)=0$. 
The factor $(e^{\left(z+\mu_\varepsilon\right)\varepsilon^{-n}})^{d_{n}}$
remains bounded as $\varepsilon\to0$. Therefore, Rouch$\acute{\text{e}}$'s
Theorem implies that $f_{\varepsilon}$ has the same number of zeros as $\tilde{\chi}_{n-1}^{\varepsilon}(\cdot + \mu_\varepsilon)$ counting multiplicity. This proves the theorem for $k=n-1.$ If $\underline{k}<n-1$
this procedure has to be applied again to show that elements of $\tilde \Sigma_k^\varepsilon$ can be approximated by elements of  $\tilde \Sigma_{k-1}^\varepsilon$. The induction step $k\mapsto k-1$
obtaining $\tilde{\chi}_{k-1}^{\varepsilon}:\mathbb{C}\to\mathbb{R}$
and $\tilde{\Sigma}_{k-1}^{\varepsilon}$ is completely analogous for all $k\geq\underline{k}$. If $\underline{k}=1$,  we have to grantee that after the induction step $k=1\mapsto k=0$, the obtained truncated characteristic equation 
\begin{equation}\label{eq:chi-0-aux}
\tilde\chi_0(z)=\det\left(-zJ_{1}^{(1)}+A_{0,1}^{(1)}\right),
\end{equation}
is nontrivial, i.e. there exits $\mu$ such that $\tilde\chi_0(z)\neq0$. If $\det J_{1}^{(1)}\neq 0$, then $\tilde\chi_0(z)=0$ if and only if $z$ is an eigenvalue of the matrix $(J_{1}^{(1)})^{-1}A_{0,1}^{(1)}$, and hence $\tilde\chi_0(z)$ is nontrivial. Otherwise, let the matrices $\mathcal{U}=[\mathcal{U}_1,\mathcal{U}_2]$ and $\mathcal{V}=[\mathcal{V}_1,\mathcal{V}_2]$ contain the left and right singular vectors corresponding to the cokernel ($\mathcal{U}_1$ and $\mathcal{V}_1$) and image ($\mathcal{U}_2$ and $\mathcal{V}_2$) of $J_{1}^{(1)}$. Condition (ND) implies $\det(\mathcal{U}_1^\ast A_{0,1}^{(1)}\mathcal{V}_1)\neq0$. Thus, using the Schur complement formula, Eq.~(\ref{eq:chi-0-aux}) can be recast as
$$
\tilde\chi_0(z)=\det(\mathcal{U}_1^\ast A_{0,1}^{(1)}\mathcal{V}_1)g(z),$$
where
$$ g(z) = \det\left(-z\mathcal{U}_2^\ast J_{1}^{(1)} \mathcal{V}_2  + \mathcal{U}_2^\ast A_{0,1}^{(1)} \mathcal{V}_2
+ \mathcal{U}_1^\ast A_{0,1}^{(1)}\mathcal{V}_2 \left( \mathcal{U}_1^\ast A_{0,1}^{(1)}\mathcal{V}_1\right)^{-1} \mathcal{U}_2^\ast A_{0,1}^{(1)}\mathcal{V}_1\right).
$$
Thus, $\tilde\chi_0(z)=0$ if and only if $z$ is an eigenvalue of the matrix 
$$
\left(\mathcal{U}_2^\ast J_{1}^{(1)} \mathcal{V}_2\right)^{-1}  \left(\mathcal{U}_2^\ast A_{0,1}^{(1)} \mathcal{V}_2 + \mathcal{U}_1^\ast A_{0,1}^{(1)}\mathcal{V}_2 \left( \mathcal{U}_1^\ast A_{0,1}^{(1)}\mathcal{V}_1\right)^{-1} \mathcal{U}_2^\ast A_{0,1}^{(1)}\mathcal{V}_1\right)
$$
This proves the Theorem. 

\subsection*{Proof of Theorem \ref{thm:spec-approx}}

\subsubsection*{(i)}

Let $\lambda\in S_{0}^{+}$. For $z\in\mathcal{B}_{2\delta}(\lambda)$ the relation $\Re(z)>r>0$
holds. Hence, for $\varepsilon\to0$ and $z\in\mathcal{B}_{2\delta}(\lambda)$
the holomorphic function $\chi^{\varepsilon}(z)$ converges uniformly
to $\chi_{0}(z)$. The Hurwitz theorem implies that there exist $\varepsilon_{0}>0$
such that for $0<\varepsilon<\varepsilon_{0}$ the functions $\chi^{\varepsilon}(z)$
and $\chi_{0}(z)$ have the same number of zeros in $\mathcal{B}_{\delta}(\lambda)$.

\subsubsection*{(ii)}

This is an immediate consequence of Theorem~\ref{thm:degeneracy-spec}. The neighborhood $U^\varepsilon(\mu)$ can be chosen independent of $\varepsilon$. In particular, we can choose $U(\mu)=B_\delta(\mu)$ for all $\delta<r$, where $B_\delta(\mu)$ is the $\delta$-ball around $\mu$ and r is as defined in Def.~\ref{def:strong}.

\subsubsection*{(iii)}

At first, we introduce some necessary notation. For $\varepsilon>0$, we recursively define the integer valued functions $\Psi_{j}(\varepsilon),$
$j=1,\dots,n$ 
\begin{equation}
\Psi_{1}(\varepsilon):=\left[\frac{\omega_{0}}{2\pi\varepsilon}\right],\label{eq:psi1}
\end{equation}
\begin{equation}
\Psi_{j+1}:=\left[\frac{1}{\varepsilon}\left(\frac{\varphi_{j}}{2\pi}+\Psi_{j}\right)\right],\quad j=1,\dots,n-1,\label{eq:psidef}
\end{equation}
where $\left[\cdot\right]$ denotes the integer part. The following
lemma describes the properties of the functions $\Psi_{j}$, which
are necessary for our analysis.
\begin{lemma}
\label{lem:1}The following limits hold true
\begin{equation}
\lim_{\varepsilon\to0}\varepsilon^{k}\Psi_{k}(\varepsilon)=\frac{\omega_{0}}{2\pi}\label{eq:lim1}
\end{equation}
\begin{equation}
\lim_{\varepsilon\to0}\varepsilon^{j}\Psi_{k}(\varepsilon)=\frac{\varphi_{k-j}}{2\pi}\,\mathrm{mod}\,1,\quad\mbox{for}\,\,1\le j<k\le n.\label{eq:lim2}
\end{equation}
\end{lemma}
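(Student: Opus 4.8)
The plan is to reduce both limits to a single elementary identity for the floor function and then propagate it through the recursion. Writing $[x]=x-\{x\}$ with $\{x\}\in[0,1)$, the definitions \eqref{eq:psi1}--\eqref{eq:psidef} give $\varepsilon\Psi_1=\frac{\omega_0}{2\pi}-\varepsilon\eta_1$ and, for $1\le j\le n-1$, the one-step relation $\varepsilon\Psi_{j+1}=\frac{\varphi_j}{2\pi}+\Psi_j-\varepsilon\eta_{j+1}$, where $\eta_1:=\{\omega_0/(2\pi\varepsilon)\}$ and $\eta_{j+1}:=\{\varepsilon^{-1}(\varphi_j/(2\pi)+\Psi_j)\}$ are the discarded fractional parts. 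The point I would exploit repeatedly is that each $\eta_j\in[0,1)$ is bounded, so $\varepsilon^{p}\eta_j\to0$ for every $p\ge1$. Multiplying the one-step relation by $\varepsilon^{p-1}$ yields the general recursion $\varepsilon^{p}\Psi_{m}=\frac{\varphi_{m-1}}{2\pi}\varepsilon^{p-1}+\varepsilon^{p-1}\Psi_{m-1}-\varepsilon^{p}\eta_{m}$, valid for $m\ge2$, $p\ge1$, which drives both statements.

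For \eqref{eq:lim1} I would set $a_k:=\varepsilon^k\Psi_k$ and apply the general recursion with $p=m=k$ to obtain $a_k-a_{k-1}=\frac{\varphi_{k-1}}{2\pi}\varepsilon^{k-1}-\varepsilon^{k}\eta_k$. Since both terms carry a power $\varepsilon^{\ge1}$, each difference tends to $0$, and telescoping from $a_1=\varepsilon\Psi_1\to\frac{\omega_0}{2\pi}$ gives $a_k\to\frac{\omega_0}{2\pi}$ for every $k$. This is a one-line induction on $k$.

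For \eqref{eq:lim2}, fix $1\le j<k\le n$ and iterate the general recursion $j-1$ times, lowering the power on $\Psi$ from $\varepsilon^{j}$ to $\varepsilon^{1}$ and the index from $k$ to $k-j+1$. This produces $\varepsilon^j\Psi_k=\varepsilon\Psi_{k-j+1}+R_\varepsilon$, where $R_\varepsilon=\sum_{i=1}^{j-1}\bigl(\frac{\varphi_{k-i}}{2\pi}\varepsilon^{j-i}-\varepsilon^{j-i+1}\eta_{k-i+1}\bigr)$. Because $j<k$ guarantees $k-j+1\ge2$, every term in $R_\varepsilon$ carries a strictly positive power of $\varepsilon$, so $R_\varepsilon\to0$. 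It then remains to evaluate $\varepsilon\Psi_{k-j+1}$ modulo $1$: the one-step relation gives $\varepsilon\Psi_{k-j+1}=\frac{\varphi_{k-j}}{2\pi}+\Psi_{k-j}-\varepsilon\eta_{k-j+1}$, and since $\Psi_{k-j}\in\mathbb Z$ drops out modulo $1$ while $\varepsilon\eta_{k-j+1}\to0$, we obtain $\varepsilon\Psi_{k-j+1}\to\frac{\varphi_{k-j}}{2\pi}\ (\mathrm{mod}\ 1)$. Adding the vanishing remainder $R_\varepsilon$ gives the claim.

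The main obstacle is conceptual rather than computational: for $j<k$ the real number $\varepsilon^j\Psi_k$ diverges, so \eqref{eq:lim2} must be read as convergence on the circle $\mathbb R/\mathbb Z$, and the argument has to be organized so that only the fractional part is tracked. The two facts that make this clean are that each $\Psi_j$ is an integer (hence invisible modulo $1$) and that every discarded fractional part $\eta_j$ is bounded, so it is annihilated by any positive power of $\varepsilon$. The only genuine care needed is the index bookkeeping in the telescoped remainder, verifying that each surviving term carries a power $\varepsilon^{\ge1}$, which is exactly where the hypothesis $j<k$ enters.
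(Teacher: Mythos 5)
Your proposal is correct and follows essentially the same route as the paper: unroll the recursion for $\Psi_{j+1}$, observe that the discarded fractional part is bounded and hence annihilated by any positive power of $\varepsilon$, and telescope down to either $\varepsilon\Psi_1$ (for \eqref{eq:lim1}) or to the single step where the integer $\Psi_{k-j}$ drops out modulo $1$ (for \eqref{eq:lim2}). Your version is in fact slightly cleaner, since you make the error terms explicit as fractional parts $\eta_j\in[0,1)$ and state precisely why the limit in \eqref{eq:lim2} must be read on $\mathbb{R}/\mathbb{Z}$, whereas the paper handles this with $\mathcal{O}(\cdot)$ notation and an informal ``$=\varphi_{k-1}/(2\pi)\,\mathrm{mod}\,1$'' step.
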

\begin{proof}
Firstly, the relation (\ref{eq:lim1}) follows from the following
\begin{equation}
\varepsilon\Psi_{1}(\varepsilon)=\varepsilon\left[\frac{\omega_{0}}{2\pi\varepsilon}\right]=\varepsilon\left(\frac{\omega_{0}}{2\pi\varepsilon}\right)+\mathcal{O}\left(\varepsilon\right)\to\frac{\omega_{0}}{2\pi}.\label{eq:11}
\end{equation}
Further for any $1\le j<k$, we have the following 
\[
\varepsilon^{j}\Psi_{k}(\varepsilon)=\varepsilon^{j}\left[\frac{1}{\varepsilon}\left(\frac{\varphi_{k-1}}{2\pi}+\Psi_{k-1}\right)\right]=\varepsilon^{j-1}\frac{\varphi_{k-1}}{2\pi}+\varepsilon^{j-1}\Psi_{k-1}+\mathcal{O}(\varepsilon^{j}).
\]
For brevity, we omit the arguments in $\Psi_{k-1}$ here and in the
following. For $j=1$, it follows that 
\[
\varepsilon\Psi_{k}(\varepsilon)\to\frac{\varphi_{k-1}}{2\pi}+\Psi_{k-1}=\frac{\varphi_{k-1}}{2\pi}\,\mbox{mod}\,1,
\]
which is a particular case of (\ref{eq:lim2}) for $j=1$. If $j>1$,
we have further 
\[
\varepsilon^{j}\Psi_{k}(\varepsilon)=\varepsilon^{j-1}\Psi_{k-1}+\mathcal{O}(\varepsilon^{j-1})=\varepsilon^{j-1}\left[\frac{1}{\varepsilon}\left(\frac{\varphi_{k-2}}{2\pi}+\Psi_{k-2}\right)\right]+\mathcal{O}(\varepsilon^{j-1})=
\]
\[
=\varepsilon^{j-2}\left(\frac{\varphi_{k-2}}{2\pi}+\Psi_{k-2}\right)+\mathcal{O}(\varepsilon^{j-1})=\cdots=\frac{\varphi_{k-j}}{2\pi}+\Psi_{k-j}+\mathcal{O}(\varepsilon)\to\frac{\varphi_{k-j}}{2\pi}\,\mbox{mod}\,1.
\]
This proves Eq. (\ref{eq:lim2}). Further, if $j=k$, we use (\ref{eq:lim2})
with $j=k-1$ and (\ref{eq:11}) to show 
\[
\varepsilon^{k}\Psi_{k}=\varepsilon\left(\frac{\varphi_{1}}{2\pi}+\Psi_{1}+\mathcal{O}(\varepsilon)\right)=\varepsilon\Psi_{1}+\mathcal{O}(\varepsilon)\to\frac{\omega_{0}}{2\pi}
\]
 which proves (\ref{eq:lim1}).
\end{proof}
We return to the proof of Theorem \ref{thm:spec-approx}\textit{(iii)}. Next, we show that for $1\leq k \leq n$, $\mu \in \mathcal{A}_k$ and $\delta>0$, there exits $\varepsilon_0>0$, such that for $0<\varepsilon<\varepsilon_0$,  there exists $\lambda\in\Sigma_{k,\delta}^{\varepsilon}\subset\Sigma^{\varepsilon}$
such that $|\Pi_{\varepsilon}^{(k)}(\lambda)-\mu|<\delta$. First, consider $k<n$ and $\mu=\gamma_{0}+i\omega_{0}\in S_{k}^{+}\subset\mathcal{A}_k$. For $\varepsilon>0$,
define $f_{\varepsilon}^{(k)}(z):=\chi^{\varepsilon}\left(\varepsilon^{k}(z+i2\pi\Psi_{k}(\varepsilon))\right)$,
i.e.
\begin{align}
f_{\varepsilon}^{(k)}(z)= & \det\left(-\varepsilon^{k}zI-i\varepsilon^{k}2\pi\Psi_{k}(\varepsilon)I+A_{0}+\sum_{j=1}^{k-1}A_{j}e^{-z\sigma_{j}\varepsilon^{k-j}-i2\pi\Psi_{k}(\varepsilon)\sigma_{j}\varepsilon^{k-j}}\right.\label{eq:fek}\\
 & \left.+A_{k}e^{-\sigma_{k}z}+\sum_{j=k+1}^{n}A_{j}e^{-z\sigma_{j}\varepsilon^{-j}-i2\pi\Psi_{k}(\varepsilon)\sigma_{j}\varepsilon^{-j}}\right)\nonumber 
\end{align}
For $\varepsilon\to0$, with the use of Lemma~\ref{lem:1}, we see
that 
\begin{align}
f_{\varepsilon}^{(k)}(z)\to&\det\left[-i\omega_{0}I+A_{0}+\sum_{j=1}^{k-1}A_{j}e^{-i\sigma_{j}\varphi_{j}}+A_{k}e^{-\sigma_{k}z}\right]\\
&=\chi_{k}\left(\omega_0,\varphi_{1},\dots,\varphi_{k-1};e^{-\sigma_{k}z}\right)\nonumber
\end{align}
locally uniformly for all $z$ with $\Re(z)>\delta$. Without loss
of generality, we assume $\delta<\gamma_{0}/2$. Let the polynomial $\chi_{k}$ be nontrivial. By assumption 
$\mu\in S_{k}^+$, such that there exists $\psi_{0}\in\mathbb{R}$ with
\[
\chi_{k}\left(\omega,\varphi_{1},\dots,\varphi_{k-1};e^{-\sigma_{k}(\gamma_{0}+i\psi_{0})}\right)=0.
\]
We can choose $\eta>0$, such that there exists $\varepsilon_{0}>0$ with the following property: For $0<\varepsilon<\varepsilon_{0}$,
$\chi_{k}$ and $f_{\varepsilon}^{(k)}$ have the same number of zeros
in the open $\eta$--disk of $\gamma_{0}+i\psi_{0}$. Here, $\eta>0$ is such that $\gamma_{0}+i\psi_{0}$ is the unique zero of $\chi_{k}$ in the closed disk around $\gamma_{0}+i\psi_{0}$ with radius $\eta$. If $z_{\varepsilon}$
is such a zero, then $\lambda_{\varepsilon}=\varepsilon^{k}z_{\varepsilon}+i\omega_0+\mathcal{O}(\varepsilon^{k+1})\in\Sigma^{\varepsilon}$.
Given $\delta>0$, we choose $\eta>0$ and $\varepsilon_{0}$ sufficiently
small such that $\mbox{dist}\left(\Pi_{\varepsilon}^{(k)}(\lambda_\varepsilon),S_{k}^+\right)<\delta$.

For the case $k=n$, we assume that $\mu\in S_{n}$. In this case,
define 
\begin{align*}
f_{\varepsilon}^{(n)}(z)= & \det\left( \vphantom{\sum_{j=1}^{k-1}A_{j,1}^{(k+1)}}
-\varepsilon^{n}zI-i\varepsilon^{n}2\pi\Psi_{n}(\varepsilon)I+A_{0}\right.\\
& \left.+\sum_{j=1}^{n-1}A_{j}e^{-z\sigma_{j}\varepsilon^{n-j}-i2\pi\Psi_{n}(\varepsilon)\sigma_{j}\varepsilon^{n-j}}+A_{n}e^{-\sigma_{k}z}\right)
\end{align*}
For $\varepsilon\to0$, $f_{\varepsilon}^{(n)}(z)\to\chi_{n}\left(\omega_0,\varphi_{1},\dots,\varphi_{n-1};e^{-\sigma_{n}z}\right)$
locally uniformly on $\mathbb{C}$. Then, similarly to the case with
$k<n$, there exists $\psi_{0}\in\mathbb{R}$ such that 
\[
\chi_{n}\left(\omega,\varphi_{1},\dots,\varphi_{n-1};e^{-\sigma_{k}(\gamma_{0}+i\psi_{0})}\right)=0,
\]
and again, if $\chi_n$ is nontrivial, we can choose $\eta>0$ such that $\chi_{n}$ has only $\gamma_{0}+i\psi_{0}$
as a zero on the closed disk around $\gamma_{0}+i\psi_{0}$. Then
there exists $\varepsilon_{0}>0$ such that for $0<\varepsilon<\varepsilon_{0}$,
$\chi_{n}$ and $f_{\varepsilon}^{(n)}$ have the same number of zeros
in the open disk of $\gamma_{0}+i\psi_{0}$. If $z_{\varepsilon}$
is such a zero, then $\lambda_{\varepsilon}=\varepsilon^{n}z_{\varepsilon}+i\omega_0+\mathcal{O}(\varepsilon^{n+1})\in\Sigma^{\varepsilon}$.
Given $\delta>0$, we choose $\eta>0$ and $\varepsilon_{0}$ sufficiently
small that $\mbox{dist}\left(\Pi_{\varepsilon}^{(n)}(\lambda),S_{n}\right)<\delta$. 
\\
The next case works analogous to the case $k=n$. We consider the case when $\det A_n =0$ and we have to consider non-generic spectrum up to some order $\underbar{k}<k<n$. Recall Def.~\ref{def:deg-spec}. We fix $k$  and consider  $\mu=\gamma_{0}+i\omega_{0}\in\tilde{S}_{k}^{-}$. Note $\Re(\mu)<0$ such that $|e^{-\sigma_{k}z}|$ is not a small perturbation. We define
 $\tilde{f}_{\varepsilon}^{(k)}(z):=\tilde{\chi}_{k}^{\varepsilon}\left(\varepsilon^{k}(z+i2\pi\Psi_{k}(\varepsilon))\right)$,
i.e.
\begin{align*}
\tilde{f}_{\varepsilon}^{(k)}(z) =& \det \left( \vphantom{\sum_{j=1}^{k-1}A_{j,1}^{(k+1)}} -\varepsilon^{k}zJ_{1}^{(k+1)}-i\varepsilon^{k}2\pi\Psi_{k}(\varepsilon)J_{1}^{(k+1)}+A_{0,1}^{(k+1)} \right.\\
 & \left.  +\sum_{j=1}^{k-1}A_{j,1}^{(k+1)}e^{-z\sigma_{j}\varepsilon^{k-j}-i2\pi\Psi_{k}(\varepsilon)\varepsilon^{k-j}}+A_{k,1}^{(k+1)}e^{-\sigma_{k}z}\right)
\end{align*}
For $\varepsilon\to0$, $\tilde{f}_{\varepsilon}^{(k)}(z)\to\tilde{\chi}_{k}\left(\omega_0,\varphi_{1},\dots,\varphi_{k-1};e^{-\sigma_{k}z}\right)$
locally uniformly on $\mathbb{C}$. Similarly, there exists $\psi_{0}\in\mathbb{R}$
such that 
\[
\tilde{\chi}_{k}\left(\omega,\varphi_{1},\dots,\varphi_{k-1};e^{-\sigma_{k}(\gamma_{0}+i\psi_{0})}\right)=0,
\]
and $\eta>0$ can be chosen such that $\tilde{\chi}_{k}$ has only
$\gamma_{0}+i\psi_{0}$ as a zero on the closed disk around $\gamma_{0}+i\psi_{0}$.
Then there exists $\varepsilon_{0}>0$ such that for $0<\varepsilon<\varepsilon_{0}$,
$\tilde{\chi}_{k}$ and $\tilde{f}_{\varepsilon}^{(k)}$ have the
same number of zeros in the $\eta$-open disk of $\gamma_{0}+i\psi_{0}$.
If $z_{\varepsilon}$ is such a zero, then $\lambda_{\varepsilon}=\varepsilon^{k}z_{\varepsilon}+i\omega_0+\mathcal{O}(\varepsilon^{k+1}) \in\tilde{\Sigma}_{k}^{\varepsilon}$.
Using Theorem~\ref{thm:degeneracy-spec} and given $\delta>0$, we
choose $\eta>0$ and $\varepsilon_{0}$ sufficiently small that $\mbox{dist}\left(\Pi_{\varepsilon}^{(k)}(\lambda_\varepsilon),\tilde{S}_{k}^{-}\right)<\delta$.

\subsubsection*{(iv) }

Assume \textit{(iv)} is false, then there exist $R_{0}>0$ and $\delta_{0}>0$ and $\lambda_{m}\in\Sigma_{c}^{\varepsilon_{m}}$, $m\in\mathbb{N}$, with
$\Im(\lambda_{m})\le R_{0}$, $\varepsilon_{m}>0$ and $\lim_{m\to\infty}\varepsilon_{m}=0$
such that
\begin{equation}
\left|\Re(\lambda_{m})\right|\ge\delta_{0}\,\,\mbox{for all}\,m\in\mathbb{N}\label{eq:contr1}
\end{equation}
or
\begin{equation}
\left|\Pi_{\varepsilon_{m}}^{(k)}(\lambda_{m})-\mu\right|\ge\delta_{0}\,\,\mbox{for all}\,\mu\in \mathcal{A}_{k},k\in\{1,\dots,n\}.\label{eq:contr2}
\end{equation}

Statement (\ref{eq:contr1}) is contradiction to the statement
of Lemma~\ref{lem:3} (below). We show that for any convergent subsequence $\left(\lambda_{m_{k}}\right)_{k\in\mathbb{N}}$
we have $$\Re\left(\lim_{k\to\infty}\lambda_{m_{k}}\right)=0.$$
Since by assumption, the imaginary parts are bounded, there
exists a subsequence converging to some $i\omega_{0}\in\mathbb{C}$.
Applying Lemma~\ref{lem:2} (below), there exist $1\leq k \leq n$ and $\mu\in\mathcal{A}_k$, such that $$\lim_{m\to\infty}\Pi_{\varepsilon_{j_{m}}}^{(k)}(\lambda_{j_{m}})\in\mathcal{A}_{k},$$
thereby contradicting (\ref{eq:contr2}). 

\begin{lemma}
\label{lem:2}Let $\left(\lambda_{j}\right)_{j\in\mathbb{N}}$ be
a sequence of complex numbers converging to $i\omega_{0}\in\mathbb{C}$,
where $\omega_{0}\in\mathbb{R}$, and let $\left(\varepsilon_{j}\right)_{j\in\mathbb{N}}$
be a sequence of positive numbers converging to zero such that $\chi^{\varepsilon_{j}}(\lambda_{j})=0$.
Then there exists a subsequence $\left(\lambda_{j_{m}}\right)_{j_{m}\in\mathbb{N}}$
such that one of the following holds:
\begin{itemize}
	\item[(a)] $\lim_{m\to\infty}\Pi_{\varepsilon_{j_{m}}}^{(k)}(\lambda_{j_{m}})\in\mathcal{A}_{k}$
	with some $1\le k\le n-1$. 
	\item[(b)] $\lim_{m\to\infty}\Pi_{\varepsilon_{j_{m}}}^{(n)}(\lambda_{j_{m}})\in \mathcal{A}_{n}$.
	\item[(c)] $\lim_{m\to\infty}\Pi_{\varepsilon_{j_{m}}}^{(k)}(\lambda_{j_{m}})=\infty$
	and $\lim_{m\to\infty}\Pi_{\varepsilon_{j_{m}}}^{(k-1)}(\lambda_{j_{m}})=i\omega_{0}$~where~$1\le k\le n$. 
	In this case, there exists a spectral manifold $\gamma_{l}^{(k)}$,
	$1\le l\le n$ such that $$\gamma_{l}^{(k)}(\omega_{0},\varphi_{1},\dots,\varphi_{k-1})=\infty$$
	for some $\varphi_{1},\dots,\varphi_{k-1}$. At the same time, $$\gamma_{l}^{(k-1)}(\omega_{0},\varphi_{1},\dots,\varphi_{k-2})=0$$
	in the case $k>1$. 
\end{itemize}

\end{lemma}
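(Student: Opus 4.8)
The plan is to identify, for a given zero sequence, the single time-scale $\varepsilon^{-k}$ on which its real part decays, and then to read off the limit from the correspondingly rescaled characteristic determinant. Since $\lambda_j\to i\omega_0$ we have $\Re(\lambda_j)\to 0$. For $k=0,\dots,n$ set $\gamma^{(k)}:=\lim_j \Re(\lambda_j)\varepsilon_j^{-k}$, the real part of $\Pi_{\varepsilon_j}^{(k)}(\lambda_j)$; after a finite diagonal extraction over $k$ each limit exists in $[-\infty,+\infty]$, and simultaneously the oscillating phases $\Im(\lambda_j)\sigma_l\varepsilon_j^{-l}\bmod 2\pi$, $1\le l\le n-1$, converge to values $\varphi_l$, which are the phase arguments entering $\chi_k$, consistent with the construction underlying Lemma~\ref{lem:1}. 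Because $\varepsilon_j^{-1}\to\infty$, the magnitudes $|\gamma^{(0)}|\le|\gamma^{(1)}|\le\cdots\le|\gamma^{(n)}|$ are nondecreasing with $\gamma^{(0)}=0$, and a finite nonzero value at level $k$ forces $\gamma^{(k+1)}=\pm\infty$. Hence the indices with $\gamma^{(k)}$ finite form an initial segment $\{0,\dots,k^\ast\}$.

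I would then split along $k^\ast$, the largest index with $\gamma^{(k^\ast)}$ finite. If $1\le k^\ast\le n-1$ and $\gamma^{(k^\ast)}\neq 0$, I aim for alternative (a); if $k^\ast=n$, for alternative (b); and if $k^\ast<n$ with $\gamma^{(k^\ast)}=0$ (so that $\gamma^{(k^\ast)}=0$ while $\gamma^{(k^\ast+1)}=\pm\infty$), for alternative (c) with $k=k^\ast+1$, since then $\Pi_{\varepsilon_j}^{(k)}(\lambda_j)$ has unbounded real part whereas $\Pi_{\varepsilon_j}^{(k-1)}(\lambda_j)\to i\omega_0$.

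For the finite alternatives (a) and (b) I follow the rescaling already used in the proof of Theorem~\ref{thm:spec-approx}\emph{(iii)}. Writing $f_{\varepsilon_j}^{(k^\ast)}(z)=\chi^{\varepsilon_j}(\varepsilon_j^{k^\ast}(z+i2\pi\Psi_{k^\ast}(\varepsilon_j)))$ and letting $\varepsilon_j\to 0$, Lemma~\ref{lem:1} gives $f_{\varepsilon_j}^{(k^\ast)}\to\chi_{k^\ast}(\omega_0,\varphi_1,\dots,\varphi_{k^\ast-1};e^{-\sigma_{k^\ast}z})$ locally uniformly, provided the terms of order $l>k^\ast$ do not interfere. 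When $\gamma^{(k^\ast)}>0$ (or $k^\ast=n$), those higher terms carry factors $e^{-\gamma^{(l)}\sigma_l+o(1)}\to 0$ and drop out, so the finite limit $\gamma^{(k^\ast)}+i\omega_0$ satisfies $\chi_{k^\ast}(\omega_0,\varphi_1,\dots;e^{-\sigma_{k^\ast}(\gamma^{(k^\ast)}+i\psi_0)})=0$ for some $\psi_0$, placing it in $S_{k^\ast}^{+}\subset\mathcal{A}_{k^\ast}$ (resp.\ in $\mathcal{A}_n=S_n$). The delicate point — and the main obstacle — is the stable branch $\gamma^{(k^\ast)}<0$ with $k^\ast<n$: there the factors $e^{-\gamma^{(l)}\sigma_l+o(1)}$ for $l>k^\ast$ blow up, so one cannot pass to the limit in the full determinant. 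Here I invoke the degeneracy reduction of Theorem~\ref{thm:degeneracy-spec}: on the iterated cokernels the offending leading blocks vanish, the rescaled determinant converges instead to $\tilde\chi_{k^\ast}(\omega_0,\varphi_1,\dots;e^{-\sigma_{k^\ast}(\gamma^{(k^\ast)}+i\psi_0)})$, and the limit lands in $\tilde S_{k^\ast}^{-}\subset\mathcal{A}_{k^\ast}$, with Condition (ND) guaranteeing that the reduced equation is nontrivial so the procedure terminates.

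For the singular alternative (c) I rescale at level $k-1=k^\ast$ instead, and pass to a further subsequence fixing the sign of $\Re(\lambda_j)$. In the unstable case $\Re(\lambda_j)>0$ (so $\gamma^{(k)}=+\infty$), the terms $A_{k},\dots,A_n$ vanish in the limit and $f_{\varepsilon_j}^{(k-1)}$ converges to $\chi_{k-1}(\omega_0,\varphi_1,\dots,\varphi_{k-2};e^{-\sigma_{k-1}z})$; its vanishing at the limiting purely imaginary rescaled eigenvalue gives $\det(-i\omega_0 I+A_0+\sum_{j=1}^{k-1}A_j e^{-i\sigma_j\varphi_j})=0$ with $\varphi_{k-1}=\psi_0$. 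By Theorem~\ref{thm:spec-mnf}\emph{(ii)} this is precisely the condition for a singular point $\gamma_l^{(k)}(\omega_0,\varphi_1,\dots,\varphi_{k-1})=\infty$, and the same theorem identifies this singular set with the zero set of $\gamma_l^{(k-1)}$, so $\gamma_l^{(k-1)}(\omega_0,\varphi_1,\dots,\varphi_{k-2})=0$ for $k>1$. In the stable case $\Re(\lambda_j)<0$ the higher terms again blow up and I reduce via Theorem~\ref{thm:degeneracy-spec} exactly as above, after which the same trichotomy applied to the projected equations $\tilde\chi$ returns one of (a)--(c) for the tilded manifolds. A final bookkeeping point I would verify is that a single diagonal subsequence can be extracted on which $k^\ast$, the sign of $\Re(\lambda_j)$, and all phases $\varphi_l$ stabilise at once.
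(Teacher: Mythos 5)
Your proposal is correct and follows essentially the same route as the paper: you identify the largest scale $k^{\ast}$ at which $\Re(\lambda_j)\varepsilon_j^{-k}$ stays finite (the paper's $\tilde k$), pass to the rescaled determinants $f_{\varepsilon}^{(k)}$ and their locally uniform limits $\chi_{k}$ (resp.\ $\tilde\chi_{k}$ via the degeneracy reduction on the stable side), and handle the boundary case $\gamma^{(k^{\ast})}=0$ through the singularity/zero-set correspondence of Theorem~\ref{thm:spec-mnf}. The only differences are cosmetic bookkeeping (extracting limits of $\Re(\lambda_j)\varepsilon_j^{-k}$ directly rather than arguing via boundedness of the paper's $\gamma_j^{(k)}$), and your level of detail on the stable branch matches that of the paper's own argument.
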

\begin{proof} Fix $1\leq k \leq n$ and write
\[
\lambda_{j}=\varepsilon_{j}^{k}\gamma_{j}^{(k)}+i\varepsilon_{j}^{k}\theta_{j}^{(k)}+i\varepsilon_{j}^{k}2\pi\Psi_{k}(\varepsilon),
\]
where $\theta_{j}^{(k)}\in[0,2\pi[$ and $\Psi_k$ as defined in Eqs.~(\ref{eq:psi1})--(\ref{eq:psidef}). Using Lemma~\ref{lem:1}, we have
$\lim_{j\to\infty}\varepsilon_{j}^{k}2\pi\Psi_{k}(\varepsilon)=\omega_{0}$. By assumption, it holds that 
$\lim_{j\to\infty}\varepsilon_{j}^{k}\gamma_{j}^{(k)}=0$. Passing to the subsequence, we can assume that $\lim_{j\to\infty}\theta_{j}^{(k)}=\theta_{0}^{(k)}\in[0,2\pi[$.
We define
\begin{align*}
\rho_{j}^{(k)}(y):= & \det\left(-\varepsilon_{j}^{k}yI-i\varepsilon_{j}^{k}2\pi\Psi_{k}(\varepsilon_{j})I+A_{0}+\sum_{l=1}^{k-1}A_{l}e^{-y\sigma_{l}\varepsilon_{j}^{k-l}-i2\pi\Psi_{k}(\varepsilon)\sigma_{l}\varepsilon_{j}^{k-l}}\right.\\
 & \qquad\quad \left.+A_{k}e^{-\sigma_{k}y}+\sum_{l=k+1}^{n}A_{l}e^{-y\sigma_{l}\varepsilon_{j}^{k-l}-i2\pi\Psi_{k}(\varepsilon_{j})\sigma_{l}\varepsilon_{j}^{k-l}}\right)
 =f_{\varepsilon_{j}}^{(k)}(y),
\end{align*}
where $f_{\varepsilon}^{(k)}(y)$ is as in Eq.~(\ref{eq:fek}).
Note that
\begin{equation}
\rho_{j}^{(k)}(\gamma_{j}^{(k)}+i\theta_{j}^{(k)})=\chi^{\varepsilon_{j}}(\lambda_{j})=0.\label{eq:rho_chi}
\end{equation}
Similarly, define
\begin{align*}
\tilde{\rho}_{j}^{(k)}(y):=\tilde{f}_{\varepsilon_{j}}^{(k)}(y)= & \det\left(-\varepsilon_{j}^{k}yJ_{1}^{(k+1)}-i\varepsilon_{j}^{k}2\pi\Psi_{k}(\varepsilon_{j})J_{1}^{(k+1)}+A_{0,1}^{(k+1)}\right.\\
 & \left.+\sum_{l=1}^{k-1}A_{l,1}^{(k+1)}e^{-y\sigma_{l}\varepsilon_{j}^{k-l}-i2\pi\Psi_{k}(\varepsilon)\sigma_{l}\varepsilon_{j}^{k-l}}+A_{k,1}^{(k+1)}e^{-\sigma_{k}y}\right),
\end{align*}
such that
\begin{equation}
\tilde{\rho}_{j}^{(k)}(\gamma_{j}^{(k)}+i\theta_{j}^{(k)})=\tilde{\chi}_{k}^{\varepsilon_{j}}(\lambda_{j})=0.\label{eq:rho_chi-1}
\end{equation}

For $k=n$ the sequence of holomorphic functions $\rho_{j}^{(n)}(y)$
converges uniformly on bounded sets of $\mathbb{C}$ to $\chi_{n}\left(\omega,\varphi_{1},\dots,\varphi_{n-1};e^{-\sigma_{n}y}\right)$,
and for $k<n$ on bounded sets of $\left\{ \lambda\in\mathbb{C}\,|\,\Re(\lambda)>\delta\right\} $
to $\chi_{k}\left(\omega,\varphi_{1},\dots,\varphi_{k-1};e^{-\sigma_{k}y}\right)$,
with any $\delta>0$. Similarly, the sequence of holomorphic functions
$\tilde{\rho}_{j}^{(n)}(y)$ converges uniformly on bounded sets of
$\left\{ \lambda\in\mathbb{C}\,|\,\Re(\lambda)<\delta\right\} $ to $\tilde{\chi}_{k}\left(\omega,\varphi_{1},\dots,\varphi_{n-1};e^{-\sigma_{k}y}\right)$.

Let $\tilde{k}$ be the largest number between $1$ and $n$ such
that the sequence $\gamma_{j}^{(\tilde{k})}$ is bounded. If no such
$\tilde{k}$ exists, the sequence is $\gamma_{j}^{(1)}$ is unbounded
and this case will be considered later.

\textit{Case (b): $\tilde{k}=n$.} There exists a subsequence $(\gamma_{j_{m}}^{(\tilde{k})})_{m\in\mathbb{N}}$
converging to $\gamma_{0}^{(\tilde{k})}\in\mathbb{R}$. Letting $m\to\infty$,
we have $$\chi_{n}\left(\omega_{0},\varphi_{1},\dots,\varphi_{n-1};e^{-\sigma_{n}(\gamma_{0}^{(\tilde{k})}+i\theta_{0}^{(\tilde{k})})}\right)=0.$$
and therefore $$\gamma_{0}^{(\tilde{k})}+i\theta_{0}^{(\tilde{k})}==\lim_{m\to\infty}\Pi_{\varepsilon_{j_{m}}}^{(n)}(\lambda_{j_{m}})\in S_{n}.$$ This implies \textit{(b)}. 

\textit{Case (a): $\tilde{k}<n$.} Then there exists a subsequence $(\gamma_{j_{m}}^{(\tilde{k})})_{m\in\mathbb{N}}$
converging to $\gamma_{0}^{(\tilde{k})}\in\mathbb{R}$. For $\gamma_{0}^{(\tilde{k})}>0$, 
we choose $\delta=\gamma_{0}/2$ and letting $m\to\infty$, we have either $$\chi_{k}\left(\omega_{0},\varphi_{1},\dots,\varphi_{k-1};e^{-\sigma_{k}(\gamma_{0}^{(\tilde{k})}=i\theta_{0}^{(\tilde{k})})}\right)=0$$
or  $$\tilde{\chi}_{k}\left(\omega_{0},\varphi_{1},\dots,\varphi_{k-1};e^{-\sigma_{k}(\gamma_{0}^{(\tilde{k})}+i\theta_{0}^{(\tilde{k})})}\right)=0.$$
Hence, in this case we have $$\gamma_{0}^{(\tilde{k})}+i\theta_{0}^{(\tilde{k})}=\lim_{m\to\infty}\Pi_{\varepsilon_{j_{m}}}^{(k)}(\lambda_{j_{m}})\in\mathcal{A}_{\tilde{k}}.$$
This implies \textit{(a)}. If $\gamma_{0}^{(\tilde{k})}=0$, then Theorem \ref{thm:spec-mnf}(iii) implies that
$\gamma_{j}^{(\tilde{k}+1)}$ is unbounded. This case will be considered later.\\
\textit{Case (c):} We study the case when the sequence $\gamma_{j}^{(1)}$ is unbounded
and $|\gamma_{j_{m}}^{(1)}|\to\infty$. Using (\ref{eq:rho_chi})--(\ref{eq:rho_chi-1}),
it follows that $\chi_{1}(\omega;0)=0$, or $\tilde{\chi}_{1}(\omega;0)=0$ as $m\to\infty$. We can assume that $\chi_{1}(\omega;0)=0$ (or respectively $\tilde{\chi}_{1}(\omega;0)=0$) is nontrivial. Therefore, there exists a spectral curve $\gamma_{l}^{(1)}(\omega)$
 with some $l\in\{1,\dots,n\}$ such that $|\gamma_{l}^{(1)}(\omega_{0})|=\infty $.
Now consider the case when $\gamma_{0}^{(\tilde{k})}=0$ and the sequence $\gamma_{j}^{(\tilde{k}+1)}$
is unbounded. Let $\tilde{k}<n$. If $$|\gamma_{j_{m}}^{(\tilde{k}+1)}|\to\infty$$
it follows that $$\chi_{\tilde{k}+1}\left(\omega,\varphi_{1},\dots,\varphi_{\tilde{k}};0\right)=0
\quad\mbox{or}\quad 
\tilde{\chi}_{\tilde{k}+1}\left(\omega,\varphi_{1},\dots,\varphi_{\tilde{k}};0\right)=0$$
with $\varphi_{\tilde{k}}=\theta_{0}^{(\tilde{k})}$. Then there exists
a spectral manifold $\gamma_{l}^{(\tilde{k}+1)}$, $1\le l\le n$
such that $$|\gamma_{l}^{(\tilde{k}+1)}(\omega_{0},\varphi_{1},\dots,\varphi_{\tilde{k}})|=\infty $$
for some $\varphi_{1},\dots,\varphi_{\tilde{k}}$. From Lemma~(\ref{lem:1})
it follows that $\gamma_{l}^{(\tilde{k})}(\omega_{0},\varphi_{1},\dots,\varphi_{\tilde{k}-1})=0$.
Moreover, $$\lim_{m\to\infty}|\Pi_{\varepsilon_{j_{m}}}^{(\tilde{k}+1)}(\lambda_{j_{m}})|=\infty $$
and $\lim_{m\to\infty}\Pi_{\varepsilon_{j_{m}}}^{(\tilde{k})}(\lambda_{j_{m}})=i\omega_{0}$.
This implies \textit{(c)}. 
\end{proof}
\begin{lemma}
\label{lem:3}Let $R>0$. For $n\in\mathbb{N}$, let $\varepsilon_{n}>0$
be such that $\lim_{m\to\infty}\varepsilon_{n}=0$. Consider $\lambda_{n}\in\Sigma_{c}^{\varepsilon_{n}}$
with $\left|\Im(\lambda_{n})\right|\le R$. Then $\lambda_{n}$ is
bounded, and for any convergent subsequence $\left(\lambda_{n_{k}}\right)_{k\in\mathbb{N}}$
we have $\Re\left(\lim_{k\to\infty}\lambda_{n_{k}}\right)=0$.
\end{lemma}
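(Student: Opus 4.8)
The plan is to derive both assertions by excluding every way in which a subsequential limit could fail to be purely imaginary, namely $\Re(\lambda_{n})\to+\infty$, a finite limit with $\Re>0$, and a finite-or-infinite limit with $\Re\le-c_{0}<0$; the last of these simultaneously supplies the lower bound needed for boundedness. Throughout I set $M:=\|A_{0}\|+\sum_{k=1}^{n}\|A_{k}\|$ and use that $\det\Delta^{\varepsilon_{n}}(\lambda_{n})=0$ furnishes a unit vector $v_{n}$ with $\lambda_{n}v_{n}=\bigl(A_{0}+\sum_{k=1}^{n}A_{k}e^{-\lambda_{n}\sigma_{k}\varepsilon_{n}^{-k}}\bigr)v_{n}$, whence $|\lambda_{n}|\le\|A_{0}\|+\sum_{k=1}^{n}\|A_{k}\|\,e^{-\Re(\lambda_{n})\sigma_{k}\varepsilon_{n}^{-k}}$. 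If $\Re(\lambda_{n})\ge0$ every exponential is $\le1$ and $\Re(\lambda_{n})\le|\lambda_{n}|\le M$, while for $\Re(\lambda_{n})<0$ this is trivial; hence $\Re(\lambda_{n})\le M$ always, so with $|\Im(\lambda_{n})|\le R$ it remains only to bound $\Re(\lambda_{n})$ from below and to treat the subsequential limits.

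For a convergent subsequence with finite limit $\lambda_{0}$ and $\Re(\lambda_{0})>0$, I would argue exactly as in the proof of (i): on a small disk about $\lambda_{0}$ the real parts stay positive, so all delay terms vanish and $\chi^{\varepsilon}$ converges uniformly to $\det(-\lambda I+A_{0})$; the Hurwitz theorem gives $\lambda_{0}\in S_{0}$, hence $\lambda_{0}\in S_{0}^{+}$, and then $\lambda_{n}\in\mathcal{B}_{r}(S_{0}^{+})\subseteq\Sigma_{s}^{\varepsilon_{n}}$ for large $n$, contradicting $\lambda_{n}\in\Sigma_{c}^{\varepsilon_{n}}$.

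The decisive case is $\Re(\lambda_{n})\le-c_{0}<0$ along a subsequence (covering both $\Re(\lambda_{n})\to-\infty$ and a finite negative limit). Here the $\tau_{n}$-term dominates. If $A_{n}$ is invertible I would write $\chi^{\varepsilon_{n}}(\lambda_{n})=\bigl(e^{-\lambda_{n}\sigma_{n}\varepsilon_{n}^{-n}}\bigr)^{d}\det(A_{n}+E_{n})$ with $E_{n}=e^{\lambda_{n}\sigma_{n}\varepsilon_{n}^{-n}}\bigl(-\lambda_{n}I+A_{0}+\sum_{k<n}A_{k}e^{-\lambda_{n}\sigma_{k}\varepsilon_{n}^{-k}}\bigr)$, and verify $\|E_{n}\|\to0$: each cross term has modulus $\|A_{k}\|\,e^{-|\Re(\lambda_{n})|(\sigma_{n}\varepsilon_{n}^{-n}-\sigma_{k}\varepsilon_{n}^{-k})}$ with a negative, unboundedly large exponent (since $\sigma_{n}\varepsilon_{n}^{-n}$ eventually dwarfs $\sigma_{k}\varepsilon_{n}^{-k}$ for $k<n$), and the term $|\lambda_{n}|\,e^{-|\Re(\lambda_{n})|\sigma_{n}\varepsilon_{n}^{-n}}$ is killed by the exponential even when $|\lambda_{n}|\to\infty$. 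Thus $\det(A_{n}+E_{n})\to\det A_{n}\neq0$, so $\chi^{\varepsilon_{n}}(\lambda_{n})\neq0$ for large $n$, contradicting $\lambda_{n}\in\Sigma^{\varepsilon_{n}}$. When $A_{n}$ is singular I would iterate the singular-value and Schur-complement reduction from the proof of Theorem~\ref{thm:degeneracy-spec}: the same modulus estimates, applied to the blocks (so that $\tilde C_{4}^{\varepsilon}\to A_{n,4}^{(n)}$ and the Schur correction in $\tilde C_{1}^{\varepsilon}$, whose products of two sub-$\tau_{n}$ exponentials remain dominated by the $\tau_{n}$-factor, tend to zero), show that up to a nonvanishing prefactor and an $o(1)$ error $\chi^{\varepsilon_{n}}(\lambda_{n})$ is governed by $\tilde\chi_{n-1}^{\varepsilon_{n}}(\lambda_{n})$, and repeating lowers the order until either the leading projected matrix becomes invertible at level $\underline{k}-1$ (by minimality of $\underline{k}$) or, for $\underline{k}=1$, one reaches the polynomial $\tilde\chi_{0}$. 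In the invertible case the preceding paragraph applies verbatim to the reduced characteristic function and gives $\chi^{\varepsilon_{n}}(\lambda_{n})\neq0$; if $\underline{k}=1$ and $\Re(\lambda_{n})\to-\infty$ then $|\lambda_{n}|\to\infty$ and the nontrivial polynomial $\tilde\chi_{0}$ (nontrivial by Condition~(ND), cf.\ the proof of Theorem~\ref{thm:degeneracy-spec}) is bounded away from $0$, again yielding $\chi^{\varepsilon_{n}}(\lambda_{n})\neq0$; and if $\underline{k}=1$ with a finite negative limit $\lambda_{0}$, passing to the limit forces $\tilde\chi_{0}(\lambda_{0})=0$, i.e.\ $\lambda_{0}\in\tilde S_{0}^{-}\subseteq\mathcal{A}_{0}$, so $\lambda_{n}\in\mathcal{B}_{r}(\tilde S_{0}^{-})\subseteq\Sigma_{s}^{\varepsilon_{n}}$, contradicting $\lambda_{n}\in\Sigma_{c}^{\varepsilon_{n}}$. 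In every instance $\Re(\lambda_{n})\le-c_{0}$ is impossible, which both delivers the missing lower bound (hence boundedness) and rules out negative limits; together with the positive case this shows $\Re(\lim_{k}\lambda_{n_{k}})=0$.

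The main obstacle, absorbing essentially all of the work, is the singular negative case: one must check that the iterated reduction of Theorem~\ref{thm:degeneracy-spec} stays valid not merely for a fixed finite argument but uniformly as $\Re(\lambda_{n})\to-\infty$ jointly with $\varepsilon_{n}\to0$, so that all cross terms, including the quadratic-in-exponential Schur corrections, remain negligible against the dominant $\tau_{n}$-factor. By comparison the upper bound and the positive-real-part case are routine.
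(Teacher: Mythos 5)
Your proof is correct and follows essentially the same route as the paper's: the upper bound comes from the dominance of the polynomial part for $\Re(\lambda)\ge 0$, positive limits are excluded by uniform convergence of $\chi^{\varepsilon}$ to $\det(-\lambda I+A_{0})$ (forcing $\lambda_{n_k}\in\mathcal{B}_{r}(S_{0}^{+})\subseteq\Sigma_{s}^{\varepsilon_{n_k}}$), and the negative case is handled by factoring out the dominant $\tau_{n}$-exponential and descending through the degeneracy reduction of Theorem~\ref{thm:degeneracy-spec} to $\tilde S_{0}^{-}$. If anything you are more careful than the paper, whose one-line treatment of $\Re(\lambda_{n_{k}})\to-\infty$ via the leading term $e^{-d\sigma_{n}\lambda\varepsilon^{-n}}\det(A_{n})$ tacitly assumes $\det A_{n}\neq0$, a gap your iterated Schur-complement argument closes explicitly.
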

\begin{proof}
Let us show that $\lambda_{n}$ is bounded. For this assume the opposite,
i.e. there exists a subsequence $\left(\lambda_{n_{k}}\right)_{k\in\mathbb{N}}$
such that either 
\begin{equation}
\lim_{k\to\infty}\Re\left(\lambda_{n_{k}}\right)=\infty\label{eq:infty}
\end{equation}
or 
\begin{equation}
\lim_{k\to\infty}\Re\left(\lambda_{n_{k}}\right)=-\infty.\label{eq:_infty}
\end{equation}
In the case (\ref{eq:infty}) the characteristic equation (\ref{eq:cheq})
has the following asymptotics \linebreak $\chi^{\varepsilon_{n_{k}}}(\lambda_{n_{k}})=(-1)^{n}\lambda_{n_{k}}^{n}+\mathcal{O}(1),$
which is clearly nonzero for all large enough $k$. Hence (\ref{eq:infty})
is not possible. In case (\ref{eq:_infty}), the leading term of the
characteristic equation $e^{-n\sigma_{n}\lambda_{n_{k}}\varepsilon^{-n}}\det\left(A_{n}\right)$
is not zero for large enough $k>k_{0}$. Thus, we arrive at the contradiction
to (\ref{eq:_infty}). Hence, $\lambda_{n}$ is bounded. \\
Let $\left(\lambda_{n_{k}}\right)_{k\in\mathbb{N}}$ be any subsequence
converging to $\lambda_{0}$. Suppose $\Re(\lambda_{0})>0$. Then,
passing to the limit in (\ref{eq:cheq}), we obtain $\det\left[-\lambda_{0}I+A_{0}\right]=0$,
which contradicts to the assumption $\lambda_{n_{k}}\not\in\Sigma_{0}^{\varepsilon_{n_{k}}}$.
Suppose $\Re \left( \lambda_0 \right)<0$. Then, Theorem \ref{thm:degeneracy-spec} implies that there is $\mu \in S_0^-$, such that $\lambda_{n_{k}}\to\mu$ and we again
arrive at the contradiction to $\lambda_{n_{k}}\not\in\Sigma_{0}^{\varepsilon_{n_{k}}}$. Hence, $\Re\left(\lim_{k\to\infty}\lambda_{n_{k}}\right)=0$. 
\end{proof}

\subsection*{Proof of Theorem \ref{thm:spec-mnf} }

Let $1\leq k\leq n$ and $(\omega,\varphi_1,\ldots,\varphi_{k-1})\in\mathbb{R}\times\mathbb{S}^{k-1}$ be fixed. The truncated characteristic equation $\chi_k(\omega,\varphi_1,\ldots,\varphi_{k-1}; Y)=0$ is a complex polynomial in $Y$ of degree $d_{k}=\text{rank}A_{k}$ with roots $Y_{l}^{(k)}$, $l=1,\ldots,d_{k}$. 

These roots depend continuously on $(\omega,\varphi_1,\ldots,\varphi_{k-1})$. Hence, there are $d_k$ continuous functions $Y_{l}^{(k)}:(\omega,\varphi_{1},\dots,\varphi_{k-1})\mapsto Y_{l}^{(k)}(\omega,\varphi_{1},\dots,\varphi_{k-1}),\mathbb{R}^{k}\to\mathbb{C},$ and $$\gamma_{l}^{(k)}(\omega,\varphi_{1},\dots,\varphi_{k-1}):=-\frac{1}{\sigma_k}\ln\left|Y_{l}^{(k)}(\omega,\varphi_{1},\dots,\varphi_{k-1})\right|,$$
such that 
$$ \chi_k(\omega,\varphi_1,\ldots,\varphi_{k-1}; Y_{l}^{(k)}(\omega,\varphi_{1},\dots,\varphi_{k-1}))=0.$$
Let $S_k$ be defined as in Def.~\ref{def:trunc-spec}. This proves \textit{(i)}. 

Statements (ii) and (iii) characterize $(\omega,\varphi_1,\ldots,\varphi_{k-1})\in\mathbb{R}\times\mathbb{S}^{k-1}$ such that $$\left|Y_{l}^{(k)}(\omega,\varphi_{1},\dots,\varphi_{k-1})\right|\in\{0,\infty\}$$ corresponding to the situation when $\gamma_{l}^{(k)}(\omega,\varphi_{1},\dots,\varphi_{k-1})$ is singular. 

\subsubsection*{(ii) }

Let $U_{k,1}$ and $V_{k,1}$ ($U_{k,2}$ and $V_{k,2}$) be the matrices
containing the left and right singular vectors of $A_{k}$ corresponding
to the singular value zero (to the nonzero singular values). Let $\tilde{A}_{k}=U_{k,2}^{\ast}A_{k}V_{k,2}$.
Then, it holds that
\[
\chi_{k}\left(\omega,\varphi_{1},\dots,\varphi_{k-1};Y\right)=\det\left(\begin{array}{cc}
U_{k,1}^{\ast}B_{k}V_{k,1} & U_{k,1}^{\ast}B_{k}V_{k,2}\\
U_{k,2}^{\ast}B_{k}V_{k,1} & U_{k,2}^{\ast}B_{k}V_{k,2}+\tilde{A}_{k}Y
\end{array}\right)
\]
where we omitted the arguments of
\begin{equation}\label{eq:matrix-Bk}
B_{k}=B_{k}(\omega,\varphi_{1},\dots,\varphi_{k-1})=-i\omega I+A{}_{0}+\sum\limits _{j=1}^{k-1}A_{j}e^{-i\sigma_j\varphi_{j}}.
\end{equation}
We apply the Schur decomposition formula and develop the determinant with respect to the columns of $\tilde{A}_{k}$ to see that the leading order monomial of the polynomial  $\chi_{k}\left(\omega,\varphi_{1},\dots,\varphi_{k-1};Y\right)$
is $\det(U_{k,1}^{\ast}B_{k}V_{k,1})\det\tilde{A}_{k}Y^{d_{k}}$, i.e.
\[
\det\left(U_{k,1}^{\ast}\left(-i\omega I+A{}_{0}+\sum\limits _{j=1}^{k-1}A_{j}e^{-i\sigma_j\varphi_{j}}\right)V_{k,1}\right)\det\tilde{A}_{k}Y^{d_{k}},
\]
the coefficient of which is non-zero by assumption. As a result, $$\gamma_{l}^{(k)}(\omega,\varphi_{1},\dots,\varphi_{k-1})=\infty$$
for some $l$ if and only if $\chi_{k}\left(\omega,\varphi_{1},\dots,\varphi_{k-1};0\right)=0,$ i.e.
$$
\det\left(-i\omega I+A_{0}+\sum_{j=1}^{k-1}A_{j}e^{-i\sigma_j\varphi_{j}}\right)=0.
$$
The last assertion of \textit{(ii)} follows from the fact that for $1\leq k < n$
\[
\chi_{k}\left(\omega,\varphi_{1},\dots,\varphi_{k-1};e^{i\varphi_{k}}\right)=\chi_{k+1}\left(\omega,\varphi_{1},\dots,\varphi_{k-1},\varphi_{k};0\right).
\]

\subsubsection*{(iii)}
In order to study the case when $|Y_{l}^{(k)}(\omega,\varphi_{1},\dots,\varphi_{k-1})|$ is unbounded, denote 
\[
q_{k}\left(\omega,\varphi_{1},\dots,\varphi_{k-1};Z\right):=\det\left[Z\left(\begin{array}{cc}
U_{k,1}^{\ast}B_{k}V_{k,1} & U_{k,1}^{\ast}B_{k}V_{k,2}\\
U_{k,2}^{\ast}B_{k}V_{k,1} & U_{k,2}^{\ast}B_{k}V_{k,2}
\end{array}\right)+\left(\begin{array}{cc}
0 & 0\\
0 & \tilde{A}_{k}
\end{array}\right)\right]
\]
where $B_k$ is as in Eq.~(\ref{eq:matrix-Bk}), and $B_k$ is invertible by assumption
\begin{equation}\label{eq:q-blockstructure}
\det\left(-i\omega I+A_{0}+\sum_{j=1}^{k-1}A_{j}e^{-i\sigma_j\varphi_{j}}\right)\ne0.
\end{equation}
 For $Z\ne0$, we have $q_{k}(\omega,\varphi_{1},\dots,\varphi_{k-1};Z)=0$
if and only if $$\chi_{k}(\omega,\varphi_{1},\dots,\varphi_{k-1};1/Z)=0$$ and hence,
we study roots of 
\begin{equation} \label{eq:q_k-aux}
q_{k}(\omega,\varphi_{1},\dots,\varphi_{k-1};Z)=0
\end{equation}
that tend to zero. Since $\mbox{rank}A_{k}=d_{k}<d$, $Z=0$ is a root of Eq.~(\ref{eq:q_k-aux}) with
multiplicity $d-d_{k}$. Denote $Q_k(Z)=\det\tilde A_k+U_{k,2}^{\ast}B_{k}V_{k,2}$. For $|Z|$ sufficiently small, we have $\det Q_{k}(Z)=\det\tilde A_k+\mathcal{O}(Z),$ and therefore $Q_{k}^{-1}(Z)$ is invertible with $Q_{k}^{-1}(Z)=\tilde A_k^{-1}+\mathcal{O}(Z).$ The following Lemma seperates the nontrivial component $\tilde q_k$ from $q_k$. The Lemma is a direct consequence of the Schur complement formula applied to Eq.~(\ref{eq:q-blockstructure}). We omit the details here. 
\begin{lemma}[separating nontrivial component of $q$]\label{lemma:nontrivial}
For all $\left(\omega,\varphi_{1},\dots,\varphi_{k-1}\right)\in\mathbb{R}^{k}$
and $|Z|$ sufficiently small, we have
\[
q_{k}(Z)=Z^{d-d_k}\det(Q_{k}(Z))\tilde{q}_{k}(Z),
\]
where we omit the dependency on $\left(\omega,\varphi_{1},\dots,\varphi_{k-1}\right)$
for brevity. Moreover, 
\[
\tilde{q}_{k}(Z)=\det\left(U_{k,1}^{\ast}B_{k}V_{k,1}-Z\left(U_{k,1}^{\ast}B_{k}V_{k,2}\right)Q_{k}(Z)^{-1}\left(U_{k,2}^{\ast}B_{k}V_{k,1}\right)\right).
\] 

\end{lemma}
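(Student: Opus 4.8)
The plan is to obtain the factorization by applying the Schur complement formula to the $d\times d$ determinant defining $q_k$, using its lower-right block as the invertible pivot. First I would write the matrix inside $q_k(\omega,\varphi_1,\dots,\varphi_{k-1};Z)$ in the explicit block form
\[
M(Z)=\begin{pmatrix} Z\,U_{k,1}^{\ast}B_kV_{k,1} & Z\,U_{k,1}^{\ast}B_kV_{k,2}\\ Z\,U_{k,2}^{\ast}B_kV_{k,1} & \tilde{A}_k+Z\,U_{k,2}^{\ast}B_kV_{k,2}\end{pmatrix},
\]
so that its lower-right block is exactly $Q_k(Z)=\tilde{A}_k+Z\,U_{k,2}^{\ast}B_kV_{k,2}$, of size $d_k\times d_k$. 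Since $Q_k(0)=\tilde{A}_k=U_{k,2}^{\ast}A_kV_{k,2}$ has full rank $d_k$ by construction, continuity of the determinant guarantees that $Q_k(Z)$ is invertible for all $|Z|$ sufficiently small, with $Q_k(Z)^{-1}=\tilde{A}_k^{-1}+\mathcal{O}(Z)$; this is precisely the range of $Z$ in which the lemma is asserted.

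Next I would invoke the Schur complement identity $\det\begin{pmatrix}A&B\\C&D\end{pmatrix}=\det(D)\det(A-BD^{-1}C)$ with $D=Q_k(Z)$, which is legitimate by the previous step. This gives
\[
q_k(Z)=\det\bigl(Q_k(Z)\bigr)\,\det\!\left(Z\,U_{k,1}^{\ast}B_kV_{k,1}-Z^{2}\,U_{k,1}^{\ast}B_kV_{k,2}\,Q_k(Z)^{-1}\,U_{k,2}^{\ast}B_kV_{k,1}\right),
\]
where the Schur complement is a $(d-d_k)\times(d-d_k)$ matrix. Factoring a common $Z$ out of every one of its entries and then pulling one $Z$ out of each of its $d-d_k$ rows produces the prefactor $Z^{d-d_k}$ and leaves exactly
\[
\tilde{q}_k(Z)=\det\!\left(U_{k,1}^{\ast}B_kV_{k,1}-Z\,U_{k,1}^{\ast}B_kV_{k,2}\,Q_k(Z)^{-1}\,U_{k,2}^{\ast}B_kV_{k,1}\right),
\]
yielding the claimed identity $q_k(Z)=Z^{d-d_k}\det(Q_k(Z))\,\tilde{q}_k(Z)$.

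The argument is essentially bookkeeping, so I do not expect a genuine obstacle; the only points that require care are confirming that the pivot is the full-rank part $\tilde{A}_k$ (so that the Schur complement step is valid on a neighborhood of $Z=0$) and tracking the powers of $Z$ correctly, noting that the single factor $Z$ in the off-diagonal and top-left blocks contributes exactly one factor of $Z$ per row of the $(d-d_k)$-dimensional Schur complement. This makes the structure transparent: the prefactor $Z^{d-d_k}\det(Q_k(Z))$ accounts for the trivial root $Z=0$ stemming from the $d-d_k$ kernel directions of $A_k$, while $\tilde{q}_k$ isolates the nontrivial component, whose value at $Z=0$ equals $\det(U_{k,1}^{\ast}B_kV_{k,1})$ --- exactly the quantity controlling whether some $\gamma_l^{(k)}=-\infty$ in Theorem~\ref{thm:spec-mnf}(iii).
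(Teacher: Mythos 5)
Your proof is correct and follows exactly the route the paper intends: the paper itself only remarks that the lemma ``is a direct consequence of the Schur complement formula'' and omits the details, and your write-up supplies precisely those details --- pivoting on the invertible block $Q_k(Z)$ (invertible near $Z=0$ because $Q_k(0)=\tilde{A}_k$ has full rank $d_k$) and extracting the factor $Z^{d-d_k}$ from the $(d-d_k)\times(d-d_k)$ Schur complement. The bookkeeping of the powers of $Z$ and the identification of $\tilde q_k$ both match the statement.
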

Now as an immediate consequence of Lemma \ref{lemma:nontrivial},
$\gamma_{l}^{(k)}(\omega,\varphi_{1},\dots,\varphi_{k-1})=-\infty$ for some $l$, if and only if $\tilde{q}_{k}\left(\omega,\varphi_{1},\dots,\varphi_{k-1};0\right)=0$
implying that $\det\left(U_{k,1}^{\ast}B_{k}V_{k,1}\right)=0$, and therefore
\[
\det\left(U_{k,1}^{\ast}\left(-i\omega I+A{}_{0}+\sum\limits _{j=1}^{k-1}A_{j}e^{-i\varphi_{j}}\right)V_{k,1}\right)=0.
\]
This proves assertion \textit{(iii)}.

\section{Conclusions}
This article provides a rigorous description of the spectrum of DDEs with constant delays acting on different time scales, i.e. constant hierarchical delays of the form $\tau_k \sim \varepsilon^k$, $k=1,\ldots,n$ and $\varepsilon>0$ small. Such a scale separation between the delays allows for an explicit decomposition of the spectrum and the decomposition reflects the hierarchical structure of the delays. Each component of the decomposition can be approximated by relatively simple sets that can be computed explicitly in many cases. The particular scaling of delays may appear very special in terms of applications, yet the coefficients $\sigma_k>0$ in our ansatz  $\tau_k = \sigma_k\varepsilon^{-k}$ grant a certain freedom in the scaling assumption. Delays with such scaling properties have been previously implemented experimentally in systems of coupled semiconductor lasers \cite{Marconi2015, Giacomelli2019}. Previous studies had already shown that \cite{Yanchuk2014a,Yanchuk2015} that DDEs with hierarchical delays possess interesting dynamical properties, which resemble those of Partial Differential Equations in several spatial dimensions. Therefore, rigorous results concerning their spectrum play important role in the study of such systems. 

We have extended the results obtained in \cite{Lichtner2011} for a single large delay to multiple large hierarchical delays, and under more general non-genericity conditions. The non-genericity condition on $A_{1}$ in \cite{Lichtner2011}, i.e. $\ker A_{1}=\ker A_{1}^{2}$, as the matrix of highest order with respect to $1/\varepsilon$ has been replaced by the abstract rank condition (ND). In particular, our results hold true when $n=1$ and $\dim\ker A_{1}< \dim\ker A_{1}^{2}$. 

Many open questions remain: 
Condition (ND) is not yet well understood and so are the algebraic
properties of the spectral manifolds. Even the case $d=2,\,n=1$ needs
to be studied in more detail; here, two spectral manifolds can 'merge'
as the solutions to Eq.~(\ref{eq:cheq-k}) undergo a complex fold bifurcation.
On another note, the explicit algorithm to compute
the degeneracy spectrum is still missing. The correspoding iteration scheme can be
derived from the proof of Theorem~\ref{thm:degeneracy-spec}. At the same time, 
in order to estimate the minimum required 'largeness' of the
delays and their minimum scale separation $\sigma_{k},1\leq k\leq n$
for Theorems~\ref{thm:degeneracy-spec} and \ref{thm:spec-approx},
one should provide necessary (if not sufficient) conditions
for the convergence of the numerical methods used in Figs.~\ref{fig:spec-1} and \ref{fig:spec-2}. 

Ultimately, several works point towards the fact, that our results can be generalized to non-autonomous DDEs. Spectral splitting of non-autonomous DDEs of has been observed for the Lyapunov spectrum of DDEs with a single large delay \cite{Sieber2013a,Heiligenthal2011} and two large delays \cite{DHuys2013}, as well as for the  Floquet spectrum of periodic orbits in DDEs with a single large delay \cite{Sieber2013a,Yanchuk2019}. All of the points listed above present interesting problems to be addressed in future research. 


\end{document}